\documentclass[12pt]{article}
\usepackage[english]{babel}
\usepackage[utf8]{inputenc}
\usepackage{amsthm,amsmath,amsfonts,amssymb}
\usepackage[normalem]{ulem}
\usepackage{nicefrac}
\usepackage{url}
\usepackage{mathtools}
\usepackage{color}
\usepackage{anysize}
\usepackage{geometry}
\usepackage{comment}
\usepackage{dsfont}
\marginsize{2cm}{2cm}{1,5cm}{1,5cm}

\numberwithin{equation}{section}
\theoremstyle{plain}

\newtheorem{theorem}{Theorem}[section]
\newtheorem{lemma}[theorem]{Lemma}
\newtheorem{proposition}[theorem]{Proposition}
\newtheorem{remark}[theorem]{Remark}
\newtheorem{corollary}[theorem]{Corollary}
\newtheorem{definition}[theorem]{Definition}

\usepackage{amsmath}
\usepackage{graphicx}
\usepackage{booktabs}
\definecolor{myblue}{RGB}{20,60,160}
\definecolor{myred}{RGB}{200,16,40}
\usepackage[colorlinks = true,
            linkcolor = myblue,
            citecolor = myred 
            ]{hyperref}
\newcommand{\1}{\mathds{1}}
\mathtoolsset{showonlyrefs}

\title{On the degree distribution of large tentacular Bienaymé--Galton--Watson trees}

\begin{document}
\author{Vanessa Dan\footnote{CMAP, École polytechnique, Institut Polytechnique de Paris, 91120 Palaiseau, France. \\ DMA, École Normale Supérieure, Université PSL, 75005 Paris, France. \\ vanessa.dan@polytechnique.edu.}}
\maketitle
\begin{abstract}
We study Bienaymé--Galton--Watson trees conditioned to have $n$ vertices and $k_n$ leaves, where $k_n\to\infty$ while remaining negligible compared to $n$, i.e. $k_n=o(n)$. More precisely, we determine the asymptotic distribution of the outdegrees of these trees. We first show that the tree is asymptotically binary: the number of vertices with two children is asymptotically equal to the number of leaves, while almost all remaining vertices have exactly one child. Then, we identify the scales at which vertices with larger outdegrees emerge. For every $d\ge2$, the critical scale $k_n \sim cn^{(d-1)/d}$, $c>0$, is the threshold for
the appearance of vertices with outdegree $d+1$. Below this scale, such vertices are absent with high probability; at the critical scale, their number converges to a Poisson distribution; above it, they satisfy a law of large
numbers. Our proofs rely on the coding of Bienaymé--Galton--Watson trees by their Łukasiewicz paths and asymptotic estimates for associated random walks.
\end{abstract}

\section{Introduction}
This article studies the asymptotic behavior of the outdegrees in Bienaymé–Galton–Watson trees (BGW trees) conditioned to have both $n$ vertices and a sublinear number $k_n = o(n)$ of leaves with $k_n\rightarrow \infty$.

Throughout the paper, $\mu$ denotes an offspring distribution on $\mathbb Z_{\geq 0}$ such that $\mu(0)>0$ and $\mu(1)>0$, since otherwise the number of leaves cannot satisfy $k_n=o(n)$, and $T_n^{k_n}$ denotes a $\mu$-BGW tree conditioned to have $n$ vertices and $k_n$ leaves. For every $i\geq 0$, we let $\phi_i(t)$ be the number of vertices of a tree $t$ having exactly $i$ children.
\begin{theorem}\label{Th_size_R} 
Assume that $\mu(2)>0$. When $k_n=o(n)$ and $k_n\rightarrow \infty$, we have with probability tending to $1$
 \[\phi_2\left(T_n^{k_n}\right) \underset{n \to \infty}{\sim} k_n  \qquad \text{and} \qquad \phi_1\left(T_n^{k_n}\right) = n-2k_n +o(k_n). \] 
\end{theorem}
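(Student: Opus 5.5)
The proof rests on two deterministic identities valid for every finite tree $t$ with $n$ vertices and $k$ leaves:
\[\sum_{i\ge0}\phi_i(t)=n,\qquad \sum_{i\ge1} i\,\phi_i(t)=n-1 .\]
Subtracting the first from the second gives $\sum_{i\ge2}(i-1)\phi_i(t)=k-1$. From this I draw two consequences. First, $\phi_2(t)\le k-1$ always. Second, once $\phi_2$ is known, $\phi_1$ is determined up to an error controlled by the excess $E(t):=\sum_{i\ge3}(i-1)\phi_i(t)$. Indeed,
\[\phi_1=n-k-\sum_{i\ge2}\phi_i=n-k-\phi_2-\sum_{i\ge3}\phi_i.\]
Since $\phi_2=k-1-E$, we get $\phi_1=n-2k+1+E-\sum_{i\ge3}\phi_i$. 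Because $0\le \sum_{i\ge3}\phi_i\le E/2$, it follows that $|\phi_1-(n-2k)|\le E+1$. Both claims of Theorem \ref{Th_size_R} therefore reduce to showing $E(T_n^{k_n})=o(k_n)$ in probability. A convenient sufficient condition is
\[\sum_{i\ge3}\phi_i(T_n^{k_n})=o(k_n)\quad\text{with }E\text{ controlled similarly.}\]

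To bound $E$, I will use the Łukasiewicz coding together with the cyclic lemma. The law of $T_n^{k_n}$ is the law of a sequence of $n$ i.i.d.\ $\mu$-variables $(\xi_1,\dots,\xi_n)$ conditioned on $\sum_j(\xi_j-1)=-1$ and $\#\{j:\xi_j=0\}=k_n$, taken up to a uniform cyclic shift. Degree counts are cyclic-shift invariant, so the vector $(\phi_i)_i$ has the law of the empirical counts of the i.i.d.\ sequence under this conditioning. Conditionally on exactly $k_n$ zeros, the remaining $n-k_n$ entries are i.i.d.\ with law $\nu=\mu(\cdot\mid\ge1)$, conditioned on $\sum(\eta_j-1)=k_n-1$. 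Thus I study a random walk $S_m=\sum_{j\le m}(\eta_j-1)$, $m=n-k_n$, whose steps are $\ge0$ and equal to $0$ with probability $p=\nu(1)>0$, conditioned on $S_m=k_n-1=o(m)$. This is a large-deviation (lower-tail) conditioning, since $\mathbb E[\eta-1]>0$ unless $\mu$ is supported on $\{0,1\}$, which is excluded here because $\mu(2)>0$.

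The main step is an exponential tilt. I choose $\theta=\theta_n\to-\infty$ so that the tilted law $\nu_\theta(j)\propto\nu(j)e^{\theta(j-1)}$ has mean $1+(k_n-1)/m$. As $\theta\to-\infty$ the tilted law concentrates on $\{1,2\}$: we have $\nu_\theta(2)\asymp e^{\theta}$ and $\nu_\theta(j)\lesssim e^{(j-1)\theta}$. Hence $e^{\theta}\asymp k_n/n$, and $\nu_\theta(\ge3)=O((k_n/n)^2)$. The conditioned law is unchanged by tilting. Under $\nu_\theta$ the expected value of $E$ over $m$ steps is $O(n(k_n/n)^2)=O(k_n^2/n)=o(k_n)$. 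To transfer this to the conditioned law, I need a local limit lower bound
\[\mathbb P_\theta(S_m=k_n-1)\ge c\,(\mathrm{Var})^{-1/2}\approx c\,k_n^{-1/2}.\]
This should follow from a local CLT for the tilted lattice walk, since the variance is of order $k_n\to\infty$ and the increments are aperiodic thanks to the mass at $0$ and $1$. Markov's inequality for $E$ then costs a factor $\sqrt{k_n}$, which is too crude. Instead I will bound $\mathbb P_\theta(E>\varepsilon k_n)$ exponentially: $E$ is a sum of i.i.d.\ terms with mean $o(k_n)$ and exponential moments, so a Chernoff bound gives $e^{-c\varepsilon k_n}$ for large $n$. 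This beats the $k_n^{-1/2}$ denominator.

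The main obstacle I anticipate is making the tilt rigorous uniformly in $n$. This requires precise asymptotics for $\theta_n$ and a local limit theorem for a triangular array whose step law degenerates as $\theta_n\to-\infty$. If $\mu$ has heavy tails, the tilt with $\theta<0$ still has all exponential moments, which is what makes the Chernoff step work. An alternative that avoids the local CLT is direct counting. One compares configurations with $E=e$ to those with $E=0$ via explicit multinomial ratios, showing that the weight decays like $(Ck_n^2/(n\,e))^{e}$. Summing over $e\ge\varepsilon k_n$ then gives a vanishing probability. I would try this combinatorial comparison first, since it needs only that $\mu(1),\mu(2)>0$.
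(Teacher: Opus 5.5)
Your proposal is correct, and after a common first step it takes a different route from the paper.

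\textbf{Common step.} The reduction is the same as in the paper. Proposition \ref{prop_size_R} uses $\sum_{d\ge1}(d-1)\phi_d=k_n-1$ to reduce both claims to Lemma \ref{prop_deg_Rnk}, i.e.\ to your statement $E=\sum_{d\ge2}d\,\phi_{d+1}=o(k_n)$ in probability.

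\textbf{The paper's route.} The paper then simply uses Markov's inequality. By the cyclic lemma and exchangeability, the conditional mean of $\phi_{d+1}$ is exactly $(n-k_n)\xi(d)\,\mathbb{P}(W'_{n-k_n-1}=k_n-d-1)/\mathbb{P}(W'_{n-k_n}=k_n-1)$. The Kortchemski--Marzouk local limit theorem (Theorem \ref{thKM}) bounds this ratio by $Cb_n^d$; it is used as a uniform upper bound in the numerator and as a sharp asymptotic in the denominator (Corollary \ref{corKM}). Hence the conditional mean of $E$ is $O(k_n^2/n)$, and $\mathbb{P}(E\ge\epsilon k_n)=O(k_n/n)$.

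So your remark that Markov ``costs a factor $\sqrt{k_n}$'' only holds if you drop the event $\{S_m=k_n-1\}$ when bounding $\mathbb{E}_\theta[E\,\mathbf{1}_{S_m=k_n-1}]$. If you keep it via exchangeability and use a uniform bound $\mathbb{P}_\theta(S_{m-1}=\cdot)\le Ck_n^{-1/2}$, the two $k_n^{-1/2}$ factors cancel.

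\textbf{Your route.} Your tilt-plus-Chernoff argument is valid. With Chernoff parameter $1$, $\mathbb{E}_\theta[\exp((\eta-1)\mathbf{1}_{\eta\ge3})]=1+O(e^{2\theta})$, so $\mathbb{P}_\theta(E>\epsilon k_n)\le\exp(-\epsilon k_n+O(k_n^2/n))$.

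It also needs far less than a local CLT: a lower bound $\mathbb{P}_\theta(S_m=k_n-1)\ge\exp(-o(k_n))$ suffices. You get this by restricting to steps $\eta\in\{1,2\}$. That costs a factor $(1-\nu_\theta(\ge3))^m=\exp(-O(k_n^2/n))$, times a binomial point probability at distance $O(k_n^2/n)=o(k_n)$ from its mean, which is $\exp(-o(k_n))$ by Stirling. So the ``main obstacle'' you anticipate disappears.

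This is in substance your combinatorial alternative, which goes through cleanly. Compare with the all-binary count vector $(\phi_0,\phi_1,\phi_2)=(k_n,n-2k_n+1,k_n-1)$. Relative to it, a vector with $a_j$ vertices of outdegree $j\ge3$ has weight at most $\prod_{j\ge3}\frac{1}{a_j!}\bigl(C_jk_n^{j-1}/(n-2k_n)^{j-2}\bigr)^{a_j}$, where $C_j=\mu(j)\mu(1)^{j-2}/\mu(2)^{j-1}$. Inserting a factor $2^{E}$ and summing over all $(a_j)_{j\ge3}$ yields $\mathbb{P}(E\ge\epsilon k_n)\le 2^{-\epsilon k_n}\exp(O(k_n^2/n))\to0$. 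Incidentally, for outdegree-$3$ configurations the decay is like $(Ck_n^2/(nE))^{E/2}$, not with exponent $E$; this is harmless.

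\textbf{What each route buys.} Your route gives an exponentially small tail and a self-contained proof of Theorem \ref{Th_size_R} with no local limit theorem, using only $\mu(1),\mu(2)>0$. The paper's first-moment route gives the sharp order $k_n^2/n$ for the conditional mean of $E$. More importantly for the paper, it sets up the exact moment identities and ratios of local probabilities that are reused in all three parts of Theorem \ref{Th_degrees}.
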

 
\noindent This result shows that the tree behaves similarly to a binary tree with elongated branches: it has a number of branching vertices (those that have two or more children) asymptotically equivalent to the number of leaves and among all branching vertices, the proportion of those with three or more children tends to 0. A natural question is then to understand how vertices with larger outdegrees appear as the number of leaves increases. For any tree $t$, we denote by $\deg_{\max}(t)$ its maximal outdegree. We write $\mathrm{Poi}(\lambda)$ the Poisson distribution with parameter $\lambda$. 
\begin{theorem}\label{Th_degrees}
    Let $d \in \mathbb{Z}_{\geq 2}$. Assume that $k_n = o(n)$, $k_n\rightarrow \infty$ and $\mu(2)>0$.
    \begin{enumerate}
        \item If $k_n=o( n^{\frac{d-1}{d}})$, then \[\mathbb{P}\left(\mathrm{deg_{max}}(T_n^{k_n}) \leq d  \right)\xrightarrow[n\to\infty]{}1.\]
        \item Let $c>0$. If $k_n\underset{n \to \infty}{\sim} cn^{\frac{d-1}{d}}$, 
        \[\phi_{d+1}\left(T_n^{k_n}\right) \xrightarrow[n\to\infty]{(d)} \mathrm{Poi}\left( c^d\frac{\mu(d+1)\mu(1)^{d-1}}{\mu(2)^d} \right). \]
        \item If $k_n^d/n^{d-1} \rightarrow + \infty$ as $n\rightarrow +\infty$, then for every $p \geq 1$ 
\[ \frac{n^{d-1}}{k_n^d}\phi_{d+1}\left(T_{n}^{k_n}\right) \xrightarrow[n\to\infty]{L^p} \frac{\mu(1)^{d-1}\mu(d+1)}{\mu(2)^d}.\]
\end{enumerate}
\end{theorem}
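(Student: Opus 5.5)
The plan is to compute the law of the degree vector of $T_n^{k_n}$ exactly, and then to study $\phi_{d+1}$ by factorial moments. Write the Łukasiewicz path of a tree with $n$ vertices as a walk with steps $i-1$, where $i$ ranges over the outdegrees. By the cyclic lemma, the number of trees with prescribed degree counts $(n_i)_{i\ge0}$, $\sum n_i=n$, $\sum i n_i=n-1$, equals $\frac1n\binom{n}{(n_i)}$. Hence
\[\mathbb P\big((\phi_i(T_n^{k_n}))_i=(n_i)_i\big)\propto \frac{1}{n}\binom{n}{(n_i)_i}\prod_i\mu(i)^{n_i},\]
restricted to $n_0=k_n$. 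Equivalently, up to the factor $1/n$ that cancels after conditioning, $(\phi_i)_{i\ge1}$ has the law of the counts of $n-k_n$ i.i.d.\ variables $\xi_j$ with law $\mu(\cdot\mid\ge1)$, conditioned on $\sum_j(\xi_j-1)=k_n-1$. This is a random walk with nonnegative steps of law $\nu(m)=\mu(m+1)/(1-\mu(0))$, conditioned to reach the level $k_n-1=o(n)$ after $N=n-k_n$ steps. This is a large deviation, since the free mean is of order $N$. I would therefore exponentially tilt: choose $\theta_n\to0$ with $\nu_\theta(m)\propto \nu(m)\theta^m$ and mean $\sim k_n/N$. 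Under the tilt, $\nu_\theta(1)\approx \theta\mu(2)/\mu(1)$, so $\theta_n\sim \mu(1)k_n/(\mu(2)n)$. The conditioned law does not depend on $\theta$.

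Step one is to get a local limit theorem for the tilted walk at the target $k_n-1$. The tilted walk has variance $\sim k_n\to\infty$, and for $k_n$ large it is essentially a sum of $\mathrm{Bin}(N,\approx\theta)$ ones plus rare larger jumps. I expect the probability to be $\asymp k_n^{-1/2}$, uniformly after removing $O(1)$ steps or shifting the target by $O(1)$. Theorem~\ref{Th_size_R} is assumed proven, but it also follows here: $\phi_2$ is approximately the number of steps equal to $1$, which is concentrated around $k_n$. Steps of size $m\ge2$ have expected count $N\theta^m\mu(m+1)/\mu(1)\ll k_n$, and larger jumps are controlled by tail sums like $\sum_m \mu(m+1)\theta^m$, which are finite since $\theta<1$.

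Step two is the factorial moments. Exchangeability gives
\[\mathbb E\big[(\phi_{d+1})_r\big]=(N)_r\,\nu_\theta(d)^r\,\frac{\mathbb P_\theta(S_{N-r}=k_n-1-rd)}{\mathbb P_\theta(S_N=k_n-1)}.\]
By the local estimate, the ratio tends to $1$ when $r$ is fixed and $rd=o(\sqrt{k_n})$. For large $r$ in regime 3, the Gaussian exponent stays bounded since $r d \ll \sqrt{k_n}$ is needed only for fixed $r$. Now
\[N\nu_\theta(d)\sim n\,\frac{\mu(d+1)\theta^d}{\mu(1)}\sim \frac{\mu(d+1)\mu(1)^{d-1}}{\mu(2)^d}\,\frac{k_n^d}{n^{d-1}}=:\lambda_n,\]
so $\mathbb E[(\phi_{d+1})_r]\sim\lambda_n^r$. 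The three regimes then follow. If $\lambda_n\to0$, Markov's inequality with $r=1$ shows $\phi_{d+1}=0$ with high probability; summing over $m\ge d+1$ via $\sum_{m>d}n\theta^m\mu(m+1)\lesssim n\theta^{d}\to0$ gives $\deg_{\max}\le d$. If $\lambda_n\to\lambda$, the factorial moments converge to $\lambda^r$, which gives Poisson convergence by the method of moments. If $\lambda_n\to\infty$, the moments $\mathbb E[(\phi_{d+1}/\lambda_n)^p]\to1$ for every integer $p$, since they are combinations of factorial moments. This gives $L^p$ convergence to the constant.

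The main obstacle is the uniform local limit estimate for the tilted walk. It must hold for $k_n$ anywhere in $(1,o(n))$, including when $k_n$ is small compared to any power of $n$. It also has to cope with $\mu$ possibly having a heavy tail, where the tilt may not be well defined if $\mu$ has no exponential moments. Since $\theta_n\to0$ and $\theta_n<1$, the sum $\sum\nu(m)\theta^m$ converges, so the tilt is fine. I would first try a direct Poisson/binomial decomposition: condition on the number of steps $\ge2$, which is $O_P(n\theta^2)=o(k_n)$, and apply the binomial local limit theorem to the remaining $\{0,1\}$ steps.
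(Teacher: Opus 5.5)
Your argument is correct, but for items 1 and 2 it takes a different route from the paper. The paper proves item 1 through the exact identity
\[
\mathbb{P}\bigl(\mathrm{deg_{max}}(T_n^{k_n})\le d\bigr)=\mathbb{P}(Y_1\le d-1)^{n-k_n}\,\frac{\mathbb{P}\bigl(W^{[d]}_{n-k_n}=k_n-1\bigr)}{\mathbb{P}\bigl(W'_{n-k_n}=k_n-1\bigr)}.
\]
It applies Theorem~\ref{thKM} separately to the truncated walk $W^{[d]}$ and to $W'$, each with its own tilting parameter ($b_{d,n}$ and $b_n$). It then compares the expansions of these parameters in powers of $k_n/(n-k_n)$ up to order $d$. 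Item 2 is handled the same way, from an exact formula for $\mathbb{P}(\phi_{d+1}=m,\ \mathrm{deg_{max}}\le d+1)$, after first invoking item 1 at level $d+1$. Only item 3 uses factorial moments (second moment plus Chebyshev, then the Stirling-number expansion for $L^p$ bounds), essentially as you do.

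Your factorial-moment scheme needs only two inputs at a single tilt. The first is that the ratio of local probabilities tends to $1$ for each fixed $r$. The second is the first-moment bound $\sum_{m\ge d}\mathbb{E}[\phi_{m+1}(T_n^{k_n})]\lesssim n\theta_n^d\asymp k_n^d/n^{d-1}$, which is exactly the mechanism of Lemma~\ref{prop_deg_Rnk}. These replace the truncated walk and the order-$d$ bootstrapped expansions, which the paper only sketches, so your route is shorter and more robust. In exchange, the paper's route computes $\mathbb{P}(\mathrm{deg_{max}}\le d)$ itself rather than bounding its complement. Its exponent comparison in fact gives $\mathbb{P}(\mathrm{deg_{max}}(T_n^{k_n})\le d)=\exp\bigl(-\lambda_n(1+o(1))+o(1)\bigr)$ throughout $k_n=o(n)$. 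Moment methods cannot capture this once $\lambda_n\to\infty$.

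Two precision points. For items 2 and 3, an estimate of order $k_n^{-1/2}$ is not enough: you need the sharp Gaussian local asymptotics, so that the ratio genuinely tends to $1$. For item 1, $m$ ranges up to $k_n-1$, so you need $\sup_x\mathbb{P}_{\theta_n}(S_{N-1}=x)\le C/\sqrt{k_n}$ uniformly over all targets, not only over $O(1)$ shifts. Both are exactly what Theorem~\ref{thKM} and Corollary~\ref{corKM}(ii)--(iii) provide. So your ``main obstacle'' can simply be cited from Kortchemski--Marzouk, as the paper does. Your binomial decomposition would also work: under the tilt, the steps $\ge 2$ contribute variance $O(n\theta_n^2)=O(k_n^2/n)=o(k_n)$, which is invisible at the diffusive scale $\sqrt{k_n}$.
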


\noindent Assume that $k_n \!\underset{n \to \infty}{\sim} \! cn^{\frac{d-1}{d}}$ for some $d \in \mathbb{Z}_{\geq 2}$ and $c>0$. Applying the first item of Theorem~\ref{Th_degrees} with $d+1$, we have that vertices with outdegree larger than $d+1$ are absent with high probability. Consequently, the maximal outdegree is asymptotically at most $d+1$. Moreover, for every $i\in [\![3,d]\!]$, the number of vertices with outdegree $i$ grows polynomially with $n$ and satisfies a law of large numbers. More precisely, for every $p\geq 1$,
\[\frac{\phi_i(T_n^{k_n})}{n^{1-\frac{i-1}{d}}}
    \xrightarrow[n\to\infty]{L^p}
    c^{i-1}\frac{\mu(1)^{i-2}\mu(i)}{\mu(2)^{i-1}}. \]
In summary, Theorem~\ref{Th_degrees} shows that the scale $n^{(d-1)/d}$ plays a special role for the appearance of vertices with outdegree $d+1$. Below this scale such vertices are absent with high probability, at this scale their number converges to a Poisson distribution, and above it they become abundant and satisfy a law of large numbers. Note that Theorem~\ref{Th_degrees} could be extended to cases where $\mu(2)=0$ by arguing as in \cite{DAN25} (see the proof of Theorem 3.4), i.e. by introducing a suitable class of admissible $\mu$-BGW trees with $k_n$ leaves.\\ 

\noindent Let us illustrate Theorem~\ref{Th_degrees} with the cases where $\mu$ is geometric or Poisson, which are of particular interest since the corresponding conditioned trees are respectively uniform plane trees and uniform Cayley trees with $n$ vertices and $k_n$ leaves.
If $\mu$ is the geometric distribution on $\mathbb{Z}_{\geq0}$ with parameter $p \in (0,1)$, the quantity $\mu(d+1)\mu(1)^{d-1}/\mu(2)^d$ is equal to $1$, so Theorem \ref{Th_degrees} implies that for every $d \in \mathbb{Z}_{\geq 2}$, if $k_n \underset{n \to \infty}{\sim} cn^{(d-1)/d}$ then
\[ \phi_{d+1}(T_n^{k_n})
\xrightarrow[n\to\infty]{(d)}
\mathrm{Poi}(c^d)\]
and for every $p\geq 1$, if $k_n^d/n^{d-1} \rightarrow + \infty$ then
\[\frac{n^{d-1}}{k_n^d}\phi_{d+1}(T_n^{k_n})
\xrightarrow[n\to\infty]{L^p}
1.\]
Similarly, when $\mu$ is the Poisson distribution with parameter $\lambda >0$, we have $\mu(d+1)\mu(1)^{d-1}/\mu(2)^d$ is equal to $2^d/(d+1)!$. Hence, Theorem~\ref{Th_degrees} yields for every $d \in \mathbb{Z}_{\geq 2}$ that if $ k_n \underset{n \to \infty}{\sim} cn^{(d-1)/d}$,
\[ \phi_{d+1}(T_n^{k_n})
\xrightarrow[n\to\infty]{(d)}
\mathrm{Poi}\left(\frac{2^d}{(d+1)!}c^d\right) \]
and for every $p\geq 1$, if $k_n^d/n^{d-1} \rightarrow + \infty$ then
\[ \frac{n^{d-1}}{k_n^d}\phi_{d+1}(T_n^{k_n})
\xrightarrow[n\to\infty]{L^p}
\frac{2^d}{(d+1)!}.\]

\paragraph{Motivations.} The total number of vertices with fixed outdegrees in BGW conditioned on their total progeny only was first studied by Kolchin \cite{K86}, who showed that, in large conditioned critical BGW trees with finite variance, the number of vertices having a prescribed outdegree is asymptotically normal. Minami \cite{MN05} established that these convergences hold jointly under additional moment condition, which was later lifted by Janson \cite{J16}.
Finally, Thévenin \cite{T20} analyzed the evolution of the number of vertices whose outdegree belongs to a fixed set $A$ during various exploration procedures of a tree conditioned to have $n$ vertices. Moreover, for  trees conditioned to have $n$ vertices whose outdegree belongs to a fixed set $B$, he established a multidimensional central limit theorem for the numbers of vertices whose outdegrees belong to the sets $A_1,\ldots,A_k$.

Recently, several works have studied biconditioned trees, i.e. trees conditioned on both their total number of vertices and their number of leaves (see \cite{LM07, Kargin23} for scaling limits and \cite{ABD23} for local limits). Motivated by questions related to random maps, Kortchemski and Marzouk obtained results \cite{KM23} on the scaling limits of the Łukasiewicz walk of such trees. 
Finally, in \cite{DAN25}, the author studied BGW trees conditioned to have a large number of vertices and either a fixed number of leaves or a fixed number of internal nodes. The global geometry of the limiting tree was characterized, which in particular provides a complete understanding of the behavior of its outdegrees.  
The purpose of the present article is to extend the results of \cite{DAN25} from the regime of a fixed number of leaves to the more general regime where the number of leaves is allowed to grow with $n$, while remaining negligible compared to the total size of the tree, i.e. $k_n = o(n)$.

\paragraph{Techniques.} We now comment on the main tools involved in the proofs. Our approach relies heavily on the classical coding of Bienaymé--Galton--Watson trees by their \L ukasiewicz paths and the cyclic lemma. Through this coding, questions about the outdegrees of a biconditioned BGW tree can be translated into questions about suitable random walks with nonnegative increments. As a consequence, many quantities of interest can be expressed in terms of probabilities of the form $\mathbb P(S_n=x_n)$, where $(S_n)$ is an unconditioned random walk and $(x_n)$ is a suitable sequence. The asymptotic analysis of such probabilities relies on a result of Kortchemski and Marzouk \cite{KM23}. Although explicit asymptotic formulas are not always available, the quantities arising in our arguments often appear through ratios of such terms, which turn out to be enough to derive the desired asymptotic estimates.

\paragraph{Structure of the paper.} The paper is organized as follows. Section \ref{sec2} recalls basic definitions and properties of Bienaymé–Galton–Watson trees, as well as the coding function used to analyze their structure. Section \ref{sec3} introduces the biconditioning on the total number of vertices and leaves, and contains the proof of Theorem \ref{Th_size_R}. Finally, Section \ref{sec4} is devoted to the proof of Theorem \ref{Th_degrees}.

\tableofcontents

\paragraph{Acknowledgements.} I would like to sincerely thank my PhD advisors, Igor Kortchemski and Cyril Marzouk, for their invaluable advice, support, and feedback throughout this work.

\section{Background on rooted planar trees and BGW trees} \label{sec2}
\begin{table}[htbp]\caption{Table of the main notation introduced in Sections \ref{sec2}, \ref{sec3} and used later.}
\centering
\begin{tabular}{c c p{12cm} }
\toprule
$\mathbb{T}_{i}$ && the set of trees with $i$ vertices\\
$\mathbb{T}^j_{i}$ && the set of trees with $i$ vertices and $j$ leaves  \\
$c_u(a)$ && the number of children of the vertex $u$ in the tree $a$ \\
$\phi_i(a)$ && $|\{u \in a \mid c_u(a) = i \}|$ for a tree $a$ with $n$ vertices\\
$T,R$ && a $\mu$-BGW tree and its reduced tree  \\
$T^k_{n},R^k_{n}$ && a $\mu$-BGW tree conditioned to have $n$ vertices and $k$ leaves and  $R(T^k_{n})$ \\
\hline
$\nu(i)$  && $ \mu(i+1)$, $\forall i \geq -1 $ \\
$\xi(i)$ && $\mu(i+1)/(1-\mu(0))$, $\forall i \geq 0 $ \\
$(X_i)_{i\geq0}$ && a sequence of i.i.d. random variables with distribution $\nu$ \\
$(Y_i)_{i\geq0}$ && a sequence of i.i.d. random variables such that $\mathbb{P}(Y_1=-1) = 0$ and $\forall i \geq 0,$ $\mathbb{P}(Y_1=i) = \xi(i)$ \\
$W_n$ && $ X_1 +\cdots+X_n$\\
$W'_n$ && $ Y_1 +\cdots+Y_n$\\
\bottomrule
\end{tabular}
\end{table}

In this section, we recall basic definitions and properties of trees and random trees that will be used throughout the paper. For further details and proofs of the stated results, the reader is referred to \cite{LG05, PC06}.

\subsection{Rooted planar trees and \L ukasiewicz paths}
Let $\mathcal{U} \coloneqq \bigcup_{n=0}^\infty (\mathbb{Z}_{>0})^n $ denote the set of words on the alphabet $\mathbb{Z}_{>0}$, where $\mathbb{Z}_{>0}^0 = \{\varnothing\} $. If $u,v \in \mathcal{U},$ then $ uv$ denotes the concatenation of $u$ and $v$. If $u,u',v \in \mathcal{U}$ and $u=vu'$, then $v$ is said to be an ancestor of $u$. In particular if $u' \in \mathbb{Z}_{>0}$, $v$ is said to be the parent of $u$ and $u$ a child of $v$. 
\begin{definition}
    A rooted planar tree $\tau$ is a subset of $\mathcal{U}$ such that 
    \begin{enumerate}
        \item $\varnothing \in \tau$,
        \item if $u = vu' \in \tau$ then $v \in \tau$,
        \item for all $u \in \tau$, there exists $c_u(\tau) \in \mathbb{Z}_{\geq 0}$ such that $uj \in \tau$ if and only if $j \leq c_u(\tau)$; $c_u(\tau)$ is called the number of children of $u$ in $\tau$.
    \end{enumerate}
\end{definition}
\noindent See Figure \ref{Fig_def_tree_and_luka} for an example. For every rooted planar tree $\tau$, $\varnothing$ is called the root of $\tau$, every $u \in \tau$ is a vertex (or a node) of $\tau$. If $u$ has no child it is a leaf of $\tau$ and it is an internal node otherwise. Finally, $|\tau|$ denotes the number of vertices in $\tau$. One can easily check that for every tree $\tau$,
$\sum_{u\in \tau}c_u(\tau)=|\tau|-1$. In the following, we will only consider rooted planar trees, so from now on we refer to them as `trees' for simplicity. 
Let us denote by $\mathbb{T}$ the set of such trees and $\mathbb{T}_n$ the set of trees having $n$ vertices. \vspace{0.2cm}

\noindent At times we will explore a tree, meaning that we visit its vertices one by one. It is then necessary to choose the order in which the vertices are visited. A natural order on trees is given by the lexicographic order on the alphabet $\mathbb{Z}_{>0}\cup\{\varnothing\}$. 
Now, let us define a coding function of trees. 
\begin{definition} Let $\tau$ be a finite tree and $u_0,u_1,...,u_{|\tau|-1}$ be its vertices listed in the lexicographic order. The \L ukasiewicz path of $\tau$, denoted by $W(\tau)=(W_i(\tau), 0 \leq i \leq |\tau|)$ is defined as follows:
\[ W_i(\tau) = \left\{
    \begin{array}{ll}
        0 & \mbox{for } n=0 \\
        W_{i-1}(\tau) + c_{u_{i-1}}(\tau) - 1 & \mbox{for all } 1\leq i \leq |\tau|.
    \end{array}
\right. \]
\end{definition}
See Figure \ref{Fig_def_tree_and_luka} for an example.
\begin{figure}[h]
\centering
        \includegraphics[scale=1]{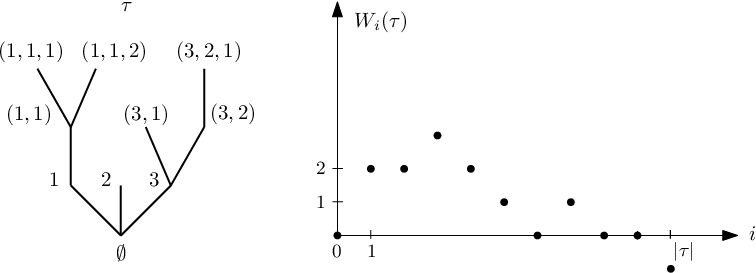}
        \caption{An example of a rooted planar tree $\tau$ and its \L ukasiewicz path.}
        \label{Fig_def_tree_and_luka}
\end{figure}

\noindent The following well-known result establishes a bijection between finite trees and paths.
\begin{proposition} \label{bij_arbre_luka}
  Let $n \in \mathbb{N}$. The map $\tau \mapsto W(\tau)$ defines a bijection between $\mathbb{T}_n$ and the set of paths $f:[\![0,n]\!] \rightarrow \mathbb{Z}$ such that \[ f(0)=0, \quad f(n)=-1, \quad f(i) \geq 0 \text{ and } f(i+1)-f(i) \in \mathbb{Z}_{\geq -1}  \text{ for } i \in [\![0,n-1]\!] .\]
\end{proposition}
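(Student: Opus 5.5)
We prove Proposition~\ref{bij_arbre_luka} by showing that $W$ maps $\mathbb{T}_n$ into the stated set of paths, and then writing down an inverse map on that set.

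\textbf{$W(\tau)$ lies in the target set.} Fix $\tau\in\mathbb{T}_n$ with vertices $u_0,\dots,u_{n-1}$ in lexicographic order. Each increment is $c_{u_{i-1}}(\tau)-1\geq -1$. Summing the increments gives
\[W_n(\tau)=\sum_{u\in\tau}c_u(\tau)-n=(n-1)-n=-1.\]
The main step is positivity, $W_i(\tau)\ge 0$ for $i\le n-1$. For this we interpret $W_i(\tau)+1$ as the size of the stack in a depth-first exploration. Start with the stack containing only the root. At step $i$, pop $u_{i}$ and push its children in reverse order, so that the first child sits on top. Because the lexicographic order is exactly depth-first order, the vertex popped at step $i$ is indeed $u_i$. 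An induction on $i$ then shows that after $i$ pops the stack holds exactly $W_i(\tau)+1$ vertices, namely the not-yet-visited vertices whose parent has been visited. For $i\le n-1$ the vertex $u_i$ has not yet been visited, and it is in the stack, since it is the root or its parent precedes it. Hence the stack is nonempty and $W_i(\tau)\ge0$.

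\textbf{Injectivity.} The path $W(\tau)$ determines the sequence $c_{u_0}(\tau),\dots,c_{u_{n-1}}(\tau)$, since $c_{u_{i-1}}(\tau)=W_i(\tau)-W_{i-1}(\tau)+1$. It remains to see that this sequence determines $\tau$. We reconstruct $\tau$ inductively by running the stack procedure above: at each step the next vertex is the one on top of the stack, and it receives the prescribed number of children, named $u1,\dots,uc$. Two trees with the same child sequence therefore produce the same vertices step by step, so they are equal.

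\textbf{Surjectivity.} Let $f$ be an admissible path and set $c_i=f(i+1)-f(i)+1\ge0$. Run the same stack construction with the child numbers $c_0,c_1,\dots$. The condition $f(i)\ge0$ for $i\le n-1$ guarantees that the stack, which has size $f(i)+1$, is nonempty at each step $i\le n-1$, so a vertex is always available to pop. The condition $f(n)=-1$ guarantees that the stack is empty after $n$ steps, so no vertex is left unvisited. The popped vertices form a set closed under taking parents, and the children of each vertex are labelled $1,\dots,c$, so this set is a tree with $n$ vertices. Its lexicographic order coincides with the popping order, because this is depth-first with the leftmost child first. Hence its \L ukasiewicz path is $f$.

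\textbf{Main obstacle.} The only delicate point is making rigorous that lexicographic order equals the stack (depth-first) order. We plan to prove it by induction on $i$. The lexicographic successor of $u_i$ is its first child if $u_i$ has children; otherwise it is the next sibling of the nearest ancestor of $u_i$ (possibly $u_i$ itself) that has a next sibling. In both cases this successor is precisely the element on top of the stack after step $i$.
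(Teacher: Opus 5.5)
Your proof is correct, and it takes a different route from the standard one. The paper gives no argument for this proposition: it quotes it as a classical fact and refers to Le Gall's and Pitman's notes.

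\textbf{The standard route.} The proof found there (e.g.\ in Le Gall's survey) goes by induction on $n$ through the decomposition at the root. If $c_\varnothing(\tau)=k$ and $\tau_1,\dots,\tau_k$ are the subtrees rooted at the children $1,\dots,k$ of the root, then the lexicographic child sequence of $\tau$ is $k$ followed by those of $\tau_1,\dots,\tau_k$. This holds because lexicographic order compares first letters first. Conversely, an admissible path splits uniquely, after its first step, at its first hitting times of $k-2,k-3,\dots,-1$.

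\textbf{What each route buys.} The induction never has to discuss lexicographic successors, since compatibility of the order with the root decomposition is immediate. Your stack argument instead gives an explicit, non-recursive inverse. It also gives the useful interpretation of $W_i(\tau)+1$ as the number of discovered but unexplored vertices. Its cost is the lemma that lexicographic order equals the depth-first stack order.

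\textbf{Making the lemma precise.} For that induction you need an ordered invariant, not only the identity of the top element. After step $i$, the stack should list, from top to bottom, the children of $u_i$, then the younger siblings of $u_i$, then the younger siblings of its parent, and so on up to the root. With this invariant, your description of the lexicographic successor completes the inductive step.

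\textbf{A shortcut for nonnegativity.} Nonnegativity does not need the lemma at all. For $1\le i\le n-1$, each of $u_1,\dots,u_i$ has its parent among $u_0,\dots,u_{i-1}$. Hence $\sum_{j<i}c_{u_j}(\tau)\ge i$, i.e.\ $W_i(\tau)\ge 0$.
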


\subsection{BGW trees and random walks}

From now on, we focus on a particular family of random trees,  Bienaymé–Galton–Watson trees. Roughly speaking, given a probability $\mu$ on $\mathbb{Z}_{\geq0}$, a  Bienaymé–Galton–Watson tree with offspring distribution $\mu$ is a random tree coding the genealogy of a population starting with one individual and where all individuals reproduce independently of each other according to the distribution $\mu$. Let us give a proper definition. 
\begin{definition}
    Let $\mu$ be an offspring distribution on $\mathbb{Z}_{\geq 0}$ and $(C_{u})_{u\in \mathcal{U}}$ be independent and indentically distributed random variables of law $\mu$. A random tree is a  Bienaymé–Galton–Watson tree with offspring distribution $\mu$ ($\mu$-BGW tree) if it has the same law as the random tree $T$ defined by \[T=\left\{u=(u_1,\dots,u_m)\in\mathcal U:
u_j\le C_{(u_1,\dots,u_{j-1})}
\text{ for every }1\le j\le m\right\}.\]
\end{definition}

In what follows, $n$ and $k_n$ are integers satisfying $k_n=o(n)$. We denote by $T$ a $\mu$-BGW tree and by $T_{n}$ such a tree conditioned on having $n$ vertices. One can note that the probability of $T$ being equal to a finite tree $\tau$ is explicit: 
\[\mathbb{P}(T=\tau)= \prod_{ u \in \tau}\mu(c_u(\tau)).\]
We define $W=(W_i, 0 \leq i \leq n)$ as the random walk started from $W_0=0$ with step-distribution $\nu$, where $\nu(i) =\mu(i+1) $ for all $ i \geq -1$. Let $H_{k}(W)\coloneqq\inf\{i\geq 0 : W_i=k\}$ denote the first hitting time of $k \in \mathbb{Z}$ by the walk $W$.

\begin{proposition}\label{propcodingluka}
 The \L ukasiewicz path $W(T_n)=(W_i(T_n), 0 \leq i \leq n)$ associated with $T_n$ has the same distribution as $W=(W_i, 0 \leq i \leq n)$ conditioned on $H_{-1}(W)=n$.
 \end{proposition}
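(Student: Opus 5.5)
The plan is to compute the law of $W(T_n)$ directly on each admissible path and compare it with the conditioned walk. Propositions~\ref{bij_arbre_luka} and the product formula for $\mathbb{P}(T=\tau)$ reduce the statement to a path-by-path identity. The natural first step is to fix a path $f:[\![0,n]\!]\to\mathbb{Z}$ satisfying the conditions of Proposition~\ref{bij_arbre_luka}, that is, $f(0)=0$, $f(n)=-1$, $f(i)\ge 0$ for $i\le n-1$, and increments in $\mathbb{Z}_{\geq -1}$. By Proposition~\ref{bij_arbre_luka} there is a unique $\tau\in\mathbb{T}_n$ with $W(\tau)=f$. Let $u_0,\dots,u_{n-1}$ be its vertices in lexicographic order. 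Then $f(i+1)-f(i)=c_{u_i}(\tau)-1$.

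On the tree side, the event $\{W(T)=f\}$ coincides with $\{T=\tau\}$, since the map $W$ is injective. Hence
\[
\mathbb{P}\bigl(W(T)=f\bigr)=\prod_{i=0}^{n-1}\mu\bigl(c_{u_i}(\tau)\bigr)=\prod_{i=0}^{n-1}\nu\bigl(f(i+1)-f(i)\bigr).
\]
On the walk side, the constraints on $f$ say exactly that the walk first hits $-1$ at time $n$ when its first $n$ steps follow $f$. Therefore $\{(W_i)_{i\le n}=f\}\subset\{H_{-1}(W)=n\}$, and by independence of the steps
\[
\mathbb{P}\bigl((W_i)_{i\le n}=f\bigr)=\prod_{i=0}^{n-1}\nu\bigl(f(i+1)-f(i)\bigr).
\]

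The two expressions agree. Summing over all admissible $f$, and using the bijection of Proposition~\ref{bij_arbre_luka} once more, gives
\[
\mathbb{P}(|T|=n)=\mathbb{P}\bigl(H_{-1}(W)=n\bigr).
\]
This quantity is positive whenever the conditioning makes sense. Dividing the path-by-path identity by it yields
\[
\mathbb{P}\bigl(W(T_n)=f\bigr)=\mathbb{P}\bigl((W_i)_{i\le n}=f \,\big|\, H_{-1}(W)=n\bigr)
\]
for every admissible $f$.

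It remains to check that both laws are carried by the set of admissible paths, which then gives equality in distribution. For $W(T_n)$ this follows from Proposition~\ref{bij_arbre_luka}. For the conditioned walk, on $\{H_{-1}(W)=n\}$ the walk stays nonnegative before time $n$, equals $-1$ at time $n$, and has increments at least $-1$, so its path is admissible.

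The only point needing care is the identification of the event $\{H_{-1}(W)=n\}$ with the set of admissible paths. The step $-1$ is the smallest possible increment, so the walk cannot jump over $-1$. Consequently, "first hitting time of $-1$ equals $n$" is exactly the condition "$f(i)\ge0$ for $i<n$ and $f(n)=-1$". Everything else is bookkeeping.
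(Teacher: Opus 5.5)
The paper gives no proof of this proposition; it quotes it as a classical fact with a pointer to \cite{LG05, PC06}. Your argument is the standard proof found there, and it is correct and complete: you match $\mathbb{P}(T=\tau)=\prod_{i}\nu\bigl(f(i+1)-f(i)\bigr)=\mathbb{P}\bigl((W_i)_{i\le n}=f\bigr)$ path by path via the bijection of Proposition~\ref{bij_arbre_luka}, and you use the fact that steps are $\geq -1$ to identify $\{H_{-1}(W)=n\}$ with the set of admissible paths.
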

\noindent  Conditioning on $H_{-1}(W) = n$ can be delicate to handle. Proposition \ref{consequencecyclelemma}, a consequence of the cycle lemma, provides a practical alternative.

\begin{proposition}\label{consequencecyclelemma} 
   Let $X_1,\ldots,X_n$ denote the i.i.d. increments of $W=(W_i, 0 \leq i \leq n)$. Let $f$ be a measurable, positive and stable by cyclic permutation function. Then, we have the following equality:
   \[\mathbb{E}\left[ f \left(X_1,\ldots,X_n\right) \1_{W_n=-1, W_i \geq 0 \, \forall i \in [\![1,n-1]\!]} \right] = \frac{1}{n} \mathbb{E}\left[ f \left(X_1,\ldots,X_n\right) \1_{W_n=-1} \right]. \]
\end{proposition}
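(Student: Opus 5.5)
The statement is the cyclic-lemma identity. My plan is to reduce it to a deterministic combinatorial fact, the cycle lemma, and then use exchangeability of the i.i.d. increments. For $x=(x_1,\ldots,x_n)\in\mathbb{Z}_{\geq -1}^n$, let $\sigma_j x=(x_{j+1},\ldots,x_n,x_1,\ldots,x_j)$ for $j\in[\![0,n-1]\!]$ denote its cyclic shifts. The deterministic fact is the following.

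\emph{Cycle lemma.} If $x_1+\cdots+x_n=-1$ and every $x_i\geq -1$, then exactly one shift $\sigma_j x$ has partial sums $s_i(\sigma_j x)\geq 0$ for all $i\in[\![1,n-1]\!]$.

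To prove it, I would take $j$ to be the \emph{last} index in $[\![0,n-1]\!]$ at which the partial sums $s_i(x)$ (with $s_0=0$) reach their minimum over $[\![0,n-1]\!]$.
\begin{itemize}
\item \textbf{Existence.} Starting the walk after index $j$, the shifted partial sums for $i\leq n-1-j$ are $s_{j+i}-s_j\geq 0$. After wrapping around, they become $s_n-s_j+s_i=-1-s_j+s_i$ for $i<j$. Since the steps are $\geq -1$, the walk cannot jump over levels downward, and since $j$ is the last minimiser, $s_i>s_j$ for indices after $j$; combined with the wrap-around formula, this gives nonnegativity. More precisely, for $i\leq j$ we have $s_i\geq s_j$, hence $-1-s_j+s_i\geq -1$. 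The value $-1$ cannot occur for $i<j$ unless $s_i=s_j$, and this is where the choice of minimiser must be examined carefully: it forces $j$ to be the \emph{first} minimiser rather than the last. I will pin down first versus last by checking both cases, and I expect the correct choice to be the first minimum.
\item \textbf{Uniqueness.} I would argue that any valid shift must start at a global minimum of the cyclically extended walk, and the strictness conditions select exactly one such starting point.
\end{itemize}

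Granting the cycle lemma, let $A=\{W_n=-1,\ W_i\geq 0\ \forall i\in[\![1,n-1]\!]\}$. I would write
\[
\mathbb{E}\left[f(X)\1_{W_n=-1}\right]=\mathbb{E}\Big[f(X)\1_{W_n=-1}\sum_{j=0}^{n-1}\1_{\sigma_jX\in A}\Big],
\]
which holds because the inner sum equals $1$ on $\{W_n=-1\}$ by the cycle lemma. Using $f(X)=f(\sigma_jX)$ and the fact that $W_n$ is invariant under cyclic shifts, each term becomes $\mathbb{E}[f(\sigma_jX)\1_{\sigma_jX\in A}]$. Since $\sigma_jX$ has the same law as $X$ (the increments are i.i.d.), each term equals $\mathbb{E}[f(X)\1_A]$. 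Summing the $n$ equal terms gives $n\,\mathbb{E}[f(X)\1_A]$, which is the claim.

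Positivity and measurability of $f$ ensure that all expectations are well defined, possibly infinite, so no integrability issue arises. The only delicate step is the existence and uniqueness part of the cycle lemma, namely getting the first/last minimiser convention right. Everything else is a routine exchangeability computation.
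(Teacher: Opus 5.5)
Your route is the right one. The paper does not prove this proposition; it quotes it as a classical consequence of the cycle lemma. Your two steps, a deterministic cycle lemma followed by averaging over the $n$ cyclic shifts, are exactly that classical argument.

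The averaging step is correct as written: $\{\sigma_j X\in A\}\subset\{W_n=-1\}$ because the total sum is shift invariant, $f(X)=f(\sigma_j X)$, and $\sigma_j X$ has the law of $X$. State the cycle lemma as uniqueness of the \emph{index} $j\in[\![0,n-1]\!]$, since that is what makes $\sum_{j=0}^{n-1}\1_{\sigma_j X\in A}=1$ on $\{W_n=-1\}$.

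The part that is not yet a proof is the existence bullet, which argues with the last minimiser and then reverses itself. The last minimiser really does fail. For $x=(-1,0,1,-1)$ the partial sums are $0,-1,-1,0,-1$, and the minimum over $[\![0,3]\!]$ is attained at $1$ and $2$. The shift $\sigma_2 x=(1,-1,-1,0)$ has partial sums $1,0,-1,-1$, which hit $-1$ before time $4$, while $\sigma_1 x=(0,1,-1,-1)$ is valid.

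So take $j$ to be the \emph{first} index in $[\![0,n-1]\!]$ at which $\min_{0\le i\le n-1}s_i$ is attained. Then $s_i\ge s_j+1$ for $0\le i<j$ (by strictness and integrality) and $s_i\ge s_j$ for $j\le i\le n-1$. The partial sums of $\sigma_j x$ are:
\begin{itemize}
\item $s_{j+i}-s_j\ge 0$ at times $i\le n-1-j$;
\item $-1-s_j+s_i$ at time $n-j+i$ for $0\le i\le j$, which is $\ge 0$ for $i<j$ and equals $-1$ at time $n$.
\end{itemize}

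Uniqueness is the same computation read backwards. If $\sigma_k x$ is valid, these formulas give $s_i\ge s_k$ for $k<i\le n-1$ and $s_i\ge s_k+1$ for $0\le i<k$. That is, $k$ is the first minimiser, so it is unique.

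Neither direction uses $x_i\ge -1$; integrality and $\sum_i x_i=-1$ suffice. You can drop the remark about the walk not jumping over levels.
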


We now present some results on BGW trees and random walks that will be used several times in the sequel.
We define the probability distribution $\xi$ on $\mathbb{Z}_{\geq 0}$ by \[\xi \coloneqq \frac{\mu(\cdot+1)}{1-\mu(0)}.\]
Let $W'$ be the random walk started from $W'_0=0$ with step distribution $\xi$. In other words, $W'$ has the same distribution as $W$ conditioned to have only nonnegative increments.

\begin{proposition}\label{classiquedecomposition}
     Let $(X_i)_{i \geq 1}$ and $(Y_i)_{i \geq 1}$ be the i.i.d. increments of $W$ and $W'$, respectively. We define $(\widehat{X}_i)_{i \geq 1}$ as the sequence obtained from $(X_i)_{i \geq 1}$ by removing all steps equal to $-1$. 
     Let $f : \mathbb{R}^{n-k_n} \to \mathbb{R}_+ $ be a measurable and stable by cyclic permutation function.
     Then, we have the following equality:
     \begin{align*}
   & \mathbb{E}\left[ f\left( (\widehat{X}_i)_{1\leq i \leq n-k_n}\right) \1_{ W_n=-1, \ W_i \geq 0 \, \forall i \in [\![1,n-1]\!], \ |\{i \in [\![1,n]\!] : X_{i}=-1\}|=k_n  }\right] \\ & \qquad = \frac{1}{n}\binom{n}{k_n}\mu(0)^{k_n}\left(1-\mu(0) \right)^{n-k_n}  \mathbb{E}\left[ f\left( (Y_i)_{1\leq i \leq n-k_n}\right) \1_{W'_{n-k_n}=k_n-1} \right].
   \end{align*}
\end{proposition}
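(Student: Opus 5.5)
We will reduce to Proposition~\ref{consequencecyclelemma} and then compute the $-1$ steps and the nonnegative steps separately. First check that the function $g(X_1,\ldots,X_n) \coloneqq f\bigl((\widehat X_i)_{1\le i\le n-k_n}\bigr)\1_{|\{i:X_i=-1\}|=k_n}$ is stable under cyclic permutations of $(X_1,\ldots,X_n)$. The indicator is clearly invariant. A cyclic shift of $(X_1,\ldots,X_n)$ induces a cyclic shift of the subsequence $(\widehat X_i)_{1\le i\le n-k_n}$ of non-$(-1)$ entries, since removing entries commutes with rotating up to a rotation of the remaining word. Because $f$ is cyclically invariant, so is $g$. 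Here $f$ is only nonnegative, and Proposition~\ref{consequencecyclelemma} applies to positive functions; by monotone convergence (or adding a small constant) this is harmless. Proposition~\ref{consequencecyclelemma} then turns the left-hand side into
\[\frac1n\,\mathbb{E}\Bigl[f\bigl((\widehat X_i)_{i\le n-k_n}\bigr)\1_{W_n=-1,\ |\{i:X_i=-1\}|=k_n}\Bigr].\]

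Next, decompose according to the set $A\subset[\![1,n]\!]$ of indices where $X_i=-1$, with $|A|=k_n$. On the event $\{X_i=-1 \iff i\in A\}$, $\widehat X$ is the sequence $(X_i)_{i\notin A}$ in increasing order. Moreover $W_n=-1$ is equivalent to $\sum_{i\notin A}X_i=k_n-1$. By independence, conditionally on this event the variables $(X_i)_{i\notin A}$ are i.i.d. with law $\nu$ conditioned on $\{\ge 0\}$, which is exactly $\xi$. The event itself has probability $\mu(0)^{k_n}(1-\mu(0))^{n-k_n}$. Hence each $A$ contributes
\[\mu(0)^{k_n}(1-\mu(0))^{n-k_n}\,\mathbb{E}\bigl[f((Y_i)_{i\le n-k_n})\1_{W'_{n-k_n}=k_n-1}\bigr],\]
which does not depend on $A$. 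Summing over the $\binom{n}{k_n}$ choices of $A$ gives the claimed identity.

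The only delicate point is the cyclic invariance of $g$. Let $\sigma$ be the shift by one position. If $X_1\neq -1$, the shifted $\widehat X$ is a one-step rotation of the original one. If $X_1=-1$, the shifted $\widehat X$ is unchanged. Either way $f$ takes the same value. The remaining steps are a routine conditioning computation.
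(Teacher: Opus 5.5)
Your proof is correct and follows the same route as the paper: apply Proposition~\ref{consequencecyclelemma} to remove the positivity constraint, then partition over the set of indices where $X_i=-1$, and use independence to identify the remaining steps with i.i.d.\ $\xi$-distributed variables. Your explicit check that $f\bigl((\widehat X_i)_{i\le n-k_n}\bigr)\1_{|\{i:X_i=-1\}|=k_n}$ is invariant under cyclic shifts fills in a step that the paper leaves implicit.
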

\begin{remark}
By the coding of a tree via its \L ukasiewicz path (Proposition~\ref{propcodingluka}), probabilities of the form appearing in Proposition~\ref{classiquedecomposition} arise naturally. The key point is that conditioning on the number of the leaves reduces the problem to a random walk with nonnegative increments.
\end{remark}
\begin{proof}[Proof of Proposition \ref{classiquedecomposition}]
By applying Proposition \ref{consequencecyclelemma}, we have
\begin{align*}
   & \mathbb{E}\left[ f\left( (\widehat{X}_i)_{1\leq i \leq n-k_n}\right) \1_{ W_n=-1, \ W_i \geq 0 \, \forall i \in [\![1,n-1]\!], \ |\{i \in [\![1,n]\!] : X_{i}=-1\}|=k_n  }\right] \\
   & \qquad = \frac{1}{n} \mathbb{E}\left[f\left( (\widehat{X}_i)_{1\leq i \leq n-k_n}\right) \1_{W_n=-1,\ |\{i \in [\![1,n]\!]: X_i=-1\}|=k_n,} \right].
   \end{align*}
We then partition according to the choice of the $k_n$ indices at which $X_i=-1$, and we obtain that the above expectation equals \[\frac{1}{n}\sum_{\substack{ I \subset [\![1,n]\!]\\
              |I|=k_n}}  \mathbb{E}\left[f\left( (X_i)_{i \notin I} \right)  \1_{X_i \neq -1 \, \forall i \notin I, \ \sum_{j \notin I} X_j = k_n-1 } \ \1_{X_i = - 1 \, \forall i \in I } \right].\]
By independence, this quantity is equal to 
\begin{equation} \label{equ_preuve_decompo}
      \frac{1}{n}\sum_{\substack{ I \subset [\![1,n]\!]\\
              |I|=k_n}}  \mathbb{P}\left( X_i = -1 \, \forall i \in I \right) \mathbb{E}\left[f\left( (X_i)_{i \notin I} \right)  \1_{X_i \neq -1 \, \forall i \notin I, \ \sum_{j \notin I} X_j = k_n-1 }\right].
\end{equation}
For any $ I \subset [\![1,n]\!]$ of size $k_n$, we have \[\mathbb{P}\left( X_i = -1 \, \forall i \in I \right) = \mu(0)^{k_n}\] and by definition of $(Y_i)_{i\geq 1}$ and $\xi$,  \[\mathbb{E}\left[f\left( (X_i)_{i \notin I} \right)  \1_{X_i \neq -1 \, \forall i \notin I, \ \sum_{j \notin I} X_j = k_n-1 }\right] = \left( 1-\mu(0)\right)^{n-k_n} \mathbb{E}\left[f\left( (Y_i)_{1\leq i \leq n-k_n} \right)  \1_{\ \sum_{j=1}^{n-k_n} Y_j = k_n-1 }\right]. \]
Note that the right hand size does not depend on $I$ so  by definition of  $W'$, we finally get that the probability \eqref{equ_preuve_decompo} is equal to 
\[ \frac{1}{n}\binom{n}{k_n} \mu(0)^{k_n} (1-\mu(0))^{n-k_n} \mathbb{E}\left[f\left( (Y_i)_{1\leq i \leq n-k_n} \right)  \1_{ W'_{n-k_n} = k_n-1 }\right],\]
which concludes the proof.
\end{proof}
 \noindent We now state an immediate corollary of Proposition \ref{classiquedecomposition} corresponding to the particular case when $f$ is the constant function $1$. 
\begin{corollary} \label{propTW} 
Let $\mu$ be an offspring distribution and $T_n^{k_n}$ a $\mu$-BGW tree conditioned to have $n$ vertices and $k_n$ leaves. Then, we have the following equality
\begin{equation*}
 \mathbb{P}\left(T \in \mathbb{T}^{k_n}_{n}\right) = \frac{1}{n}\mathbb{P}\left(B_{n}=k_n\right)\mathbb{P}\left(W'_{n-k_n} = k_n-1\right),   
\end{equation*}
where $B_{n}$ is the sum of $n$ independent Bernoulli random variables with parameter $\mu(0)$ and  $W'$ the random walk started from $W'_0=0$ with step distribution $\xi$.
\end{corollary}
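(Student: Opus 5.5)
The plan is to specialize Proposition~\ref{classiquedecomposition} to the constant function $f\equiv 1$, which is measurable, nonnegative and trivially invariant under cyclic permutations. First I will rewrite the left-hand side. By Proposition~\ref{propcodingluka}, or more directly by the bijection of Proposition~\ref{bij_arbre_luka} together with the product formula $\mathbb{P}(T=\tau)=\prod_{u\in\tau}\mu(c_u(\tau))$, the event $\{T\in\mathbb{T}_n\}$ has the same probability as the event $\{W_n=-1,\ W_i\ge 0\ \forall i\in[\![1,n-1]\!]\}$ for the walk $W$ with step law $\nu$. Indeed, the increments of the Łukasiewicz path are $c_{u_{i-1}}-1$, so the law of the first $n$ increments of $W$ restricted to this event matches the law of $T$ on $\mathbb{T}_n$.

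Under this correspondence, a leaf is exactly a step equal to $-1$. Hence
\[
\{T\in\mathbb{T}^{k_n}_n\}\quad\text{corresponds to}\quad \{W_n=-1,\ W_i\ge0\ \forall i\in[\![1,n-1]\!],\ |\{i\in[\![1,n]\!]: X_i=-1\}|=k_n\}.
\]
Therefore $\mathbb{P}(T\in\mathbb{T}^{k_n}_n)$ equals the left-hand side of Proposition~\ref{classiquedecomposition} with $f\equiv1$.

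The right-hand side of Proposition~\ref{classiquedecomposition} then reads
\[
\frac1n\binom{n}{k_n}\mu(0)^{k_n}(1-\mu(0))^{n-k_n}\,\mathbb{P}(W'_{n-k_n}=k_n-1).
\]
The prefactor $\binom{n}{k_n}\mu(0)^{k_n}(1-\mu(0))^{n-k_n}$ is exactly $\mathbb{P}(B_n=k_n)$ for a binomial variable $B_n\sim\mathrm{Bin}(n,\mu(0))$, that is, a sum of $n$ independent Bernoulli$(\mu(0))$ variables. Substituting this gives the claimed identity.

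There is no real obstacle here. The only point requiring care is the identification of $\{T\in\mathbb{T}_n^{k_n}\}$ with the walk event, which is the standard coding of trees by their Łukasiewicz paths (Proposition~\ref{propcodingluka}) together with the observation that leaves are precisely the $-1$ steps.
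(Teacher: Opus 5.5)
Your proposal is correct and follows the paper's own argument: take $f\equiv 1$ in Proposition~\ref{classiquedecomposition} and recognize $\binom{n}{k_n}\mu(0)^{k_n}(1-\mu(0))^{n-k_n}$ as $\mathbb{P}(B_n=k_n)$. You also spell out why $\{T\in\mathbb{T}_n^{k_n}\}$ matches the walk event (leaves are the $-1$ steps), which the paper leaves implicit.
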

Finally, we recall Theorem 1.1.(ii) from \cite{KM23}, applied with the choices $n=n-k_n$, $x_n=k_n$, $v_n= \sqrt{k_n}$. This result will play a key role in what follows.
\begin{theorem} \label{thKM}
Let $\eta$ be an offspring distribution such that $\eta(0) >0$ and $\eta(1) >0$ and denote by $F$ its generating function. Let $(S_n)_{n\geq0}$ be a random walk started from $S_0=0$ with step distribution $\eta$. Assume $k_n=o(n)$ and $k_n\rightarrow \infty$. Then
    \begin{equation}\label{eqKMf1}
    \sup_{k\geq -k_n} \left| \sqrt{k_n} \frac{b_n^{k_n+k}}{F(b_n)^{n-k_n}}\mathbb{P}\left(S_{n-k_n} = k_n+k\right) - \frac{1}{\sqrt{2\pi}}e^{-\frac{k^2}{2k_n}}\right| \xrightarrow[n \rightarrow \infty]{} 0,
\end{equation}
where $b_n$ is defined by
 \[b_n\frac{F'(b_n)}{F(b_n)}=\frac{k_n}{n-k_n}.\]
\end{theorem}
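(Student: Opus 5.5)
The statement is the local limit theorem of Kortchemski and Marzouk. To prove it, I would use an exponential tilting (Cramér transform) and then a local central limit theorem for a triangular array, proved by Fourier inversion.

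\textbf{Tilting.} For $b>0$ smaller than the radius of convergence of $F$, let $\eta_b(i)=\eta(i)b^i/F(b)$ and let $S^{(b)}$ be the random walk with step distribution $\eta_b$. Since a path of length $m$ ending at $x$ has weight $b^x/F(b)^m$ under the tilt,
\[\mathbb P(S_m=x)=F(b)^m b^{-x}\,\mathbb P(S^{(b)}_m=x).\]
Hence the quantity in \eqref{eqKMf1} equals $\sqrt{k_n}\,\mathbb P(S^{(b_n)}_{n-k_n}=k_n+k)$.

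\textbf{Parameters of the tilted walk.}
\begin{itemize}
\item The mean of $\eta_{b}$ is $bF'(b)/F(b)$. The defining equation of $b_n$ therefore makes $\mathbb E[S^{(b_n)}_{n-k_n}]=k_n$ exactly.
\item The map $b\mapsto bF'(b)/F(b)$ is continuous, increasing and vanishes at $0$. Since $k_n/(n-k_n)\to0$, a unique $b_n$ exists for $n$ large and $b_n\to0$. So $b_n$ stays well inside the radius of convergence (which is positive, assumed if needed).
\item Because $\eta(0)>0$, we get $\eta_{b_n}(1)=\eta(1)b_n/\eta(0)\,(1+o(1))$ and $\eta_{b_n}(i)=O(b_n^i)$ for $i\ge2$. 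In particular $p_n:=\eta_{b_n}(1)\sim k_n/n$.
\item $\sum_i i^j\eta_{b_n}(i)=O(b_n)$ for $j=2,3$. Hence the variance of one step is $\frac{k_n}{n}(1+o(1))$, and the total variance is $\sigma_n^2\sim k_n\to\infty$.
\end{itemize}

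\textbf{Fourier inversion.} Let $m=n-k_n$ and let $\varphi_n$ be the characteristic function of $\eta_{b_n}$. Then
\[\sqrt{k_n}\,\mathbb P(S^{(b_n)}_m=k_n+k)=\frac{\sqrt{k_n}}{2\pi}\int_{-\pi}^{\pi}e^{-it(k_n+k)}\varphi_n(t)^m\,dt.\]
Substituting $t=u/\sqrt{k_n}$ in the Gaussian identity gives
\[\frac{1}{\sqrt{2\pi}}e^{-k^2/2k_n}=\frac{\sqrt{k_n}}{2\pi}\int_{\mathbb R}e^{-itk}e^{-k_nt^2/2}\,dt.\]
The absolute difference between the two expressions is therefore bounded by an integral that does not depend on $k$. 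This is what gives uniformity in $k\ge -k_n$. I split this integral into three regions.

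\begin{itemize}
\item $|t|\le A/\sqrt{k_n}$. A Taylor expansion with the moment bounds above gives $\log\varphi_n(t)=it\frac{k_n}{m}-\frac{t^2}{2}\frac{\sigma_n^2}{m}+O(b_n|t|^3)$. Multiplying by $m$, the cubic error is $O(mb_n|t|^3)=O(k_n|t|^3)=O(A^3/\sqrt{k_n})\to0$. So $e^{-itk_n}\varphi_n(t)^m$ converges to $e^{-k_nt^2/2}$ uniformly in the rescaled variable $u$, and this region contributes $o(1)$.
\item $A/\sqrt{k_n}\le|t|\le\pi$. This is the step I expect to be the main obstacle, since the tilted law degenerates to a Dirac mass at $0$. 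First I would use the two atoms at $0$ and $1$, together with the triangle inequality, to get a bound $|\varphi_n(t)|\le 1-c\,p_n(1-\cos t)$ uniformly in $n$. Indeed, the combination $\eta_{b_n}(0)+\eta_{b_n}(1)e^{it}$ loses modulus of order $p_n(1-\cos t)$, and the remaining mass is only $O(b_n^2)$. Hence $|\varphi_n(t)|^m\le\exp(-c'k_n(1-\cos t))\le \exp(-c''k_nt^2)$. After multiplying by $\sqrt{k_n}$ and integrating, this region contributes at most $C\int_{|u|\ge A}e^{-c''u^2}du$, which is small for $A$ large.
\item The Gaussian tail $\sqrt{k_n}\int_{|t|\ge A/\sqrt{k_n}}e^{-k_nt^2/2}\,dt$ is likewise small for $A$ large.
\end{itemize}

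Letting $n\to\infty$ and then $A\to\infty$ yields \eqref{eqKMf1}.
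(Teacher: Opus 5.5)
Your proof is correct, but it does not follow the paper, which gives no proof of Theorem~\ref{thKM}. The paper imports the result from Kortchemski--Marzouk \cite{KM23} (Theorem~1.1(ii), specialised to $n-k_n$ steps, target $x_n=k_n$ and scale $v_n=\sqrt{k_n}$) and only derives Corollary~\ref{corKM} from it. You instead give a self-contained argument. The Cram\'er tilt at $b_n$ turns the left-hand side of \eqref{eqKMf1} into $\sqrt{k_n}\,\mathbb{P}(S^{(b_n)}_{n-k_n}=k_n+k)$ for a walk with mean exactly $k_n$. A Fourier-inversion local CLT for this triangular array then finishes, with a bound independent of $k$, so you even get the supremum over all $k\in\mathbb{Z}$.

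The two places where the degeneracy of $\eta_{b_n}$ could cause trouble are handled correctly:
\begin{enumerate}
\item Since $b_n\to0$ and $F$, being a probability generating function, has radius of convergence at least $1$, no extra assumption is needed there. Every fixed moment of $\eta_{b_n}$ is then $O(b_n)$. This gives the cubic error $O(k_n|t|^3)$ and $\sigma_n^2=k_n(1+O(k_n/n))$, so replacing $\sigma_n^2$ by $k_n$ costs nothing on the window $|u|\le A$.
\item The triangle inequality on the atoms at $0$ and $1$ gives exactly $|\varphi_n(t)|\le 1-\frac{ap}{a+p}(1-\cos t)$, with $a=\eta_{b_n}(0)$ and $p=\eta_{b_n}(1)$. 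The leftover mass needs no separate $O(b_n^2)$ estimate. Since $(n-k_n)p\sim k_n$, this yields $|\varphi_n(t)|^{n-k_n}\le e^{-ck_nt^2}$ on $[-\pi,\pi]$, which replaces the usual fixed-law aperiodicity argument.
\end{enumerate}

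The citation buys generality: the theorem in \cite{KM23} is formulated for general targets and normalisations, not only the regime $b_n\to0$. Your route buys an elementary proof of exactly the statement the paper needs, and in it the assumption $k_n=o(n)$ is precisely what makes every estimate easy.
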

\noindent We also derive some useful consequences.
\begin{corollary} \label{corKM}
We keep the notation and assumptions of Theorem \ref{thKM}. Let $r \geq 0$ be a fixed integer.
\begin{enumerate}
\item[(i)] \label{corKM1}For $n$ large enough let $b_{n}^{(r)}$ be defined by
$$ b^{(r)}_{n} \frac{F'(b^{(r)}_{n})}{F(b^{(r)}_{n})}= \frac{k_{n}}{n-k_n-r}.$$ We have
$$b_{n} \underset{n \to \infty}{\sim} \frac{k_{n}}{n} \frac{\mathbb{P}(\eta=0)}{\mathbb{P}(\eta=1)} \qquad\textrm{and} \qquad \frac{b^{(r)}_{n}}{b_{n}}=1 + O \left(\frac{1}{n}\right).$$
\item[(ii)] \label{corKM2} There exists $C>0$ such that, for $n$ large enough and uniformly for $-k_n\le k\le0$,
$$
\mathbb{P}(S_{n-k_n-r}=k_n+k)\le \frac{C}{\sqrt{k_n}}\frac{F(b_n)^{n-k_n-r}}{b_n^{k_n+k}}.
$$
\item[(iii)] \label{corKM3} For every fixed $s \in \mathbb{Z}$ we have
$$
\mathbb{P}(S_{n-k_n-r}=k_n-s)  \underset{n \to \infty}{\sim}  \frac{1}{\sqrt{2\pi k_n}}\frac{F(b_n)^{n-k_n-r}}{b_n^{k_n-s}}.
$$
\end{enumerate}
\end{corollary}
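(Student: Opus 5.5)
The plan is to derive all three items from Theorem~\ref{thKM}, by treating the shift by $r$ as a reindexing of the sequence and then comparing the normalizing constants built from $b_n^{(r)}$ with those built from $b_n$.

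\emph{Item (i).} Set $g(b)=bF'(b)/F(b)$ on $[0,1)$. Since $F(b)=\eta(0)+\eta(1)b+O(b^2)$ and $bF'(b)=\eta(1)b+O(b^2)$, the function $g$ is $C^1$ near $0$, with $g(0)=0$ and $g'(0)=\eta(1)/\eta(0)>0$. Hence $g$ is increasing on a neighbourhood $[0,\delta]$ and has a $C^1$ inverse there.
\begin{itemize}
\item Because $k_n/(n-k_n)\to0$ and $k_n/(n-k_n-r)\to0$, both $b_n$ and $b_n^{(r)}$ are well defined for $n$ large, lie in $[0,\delta]$, and tend to $0$.
\item From $g(b)\sim b\,\eta(1)/\eta(0)$ we get $b_n\sim \frac{\eta(0)}{\eta(1)}\frac{k_n}{n-k_n}\sim \frac{k_n}{n}\frac{\eta(0)}{\eta(1)}$.
\item For the ratio, compute
\[
g(b_n^{(r)})-g(b_n)=\frac{k_n\, r}{(n-k_n)(n-k_n-r)}=O\!\left(\frac{k_n}{n^2}\right).
\]
Since $g'$ is bounded below by a positive constant on $[0,\delta]$, the mean value theorem gives $|b_n^{(r)}-b_n|=O(k_n/n^2)$. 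Dividing by $b_n\asymp k_n/n$ yields $b_n^{(r)}/b_n=1+O(1/n)$.
\end{itemize}

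\emph{Reindexing Theorem~\ref{thKM}.} Put $m=n-r$ and $k'_m=k_{m+r}$. Then $k'_m=o(m)$ and $k'_m\to\infty$, so Theorem~\ref{thKM} applies to the pair $(m,k'_m)$. Its parameter $b$ solves $g(b)=k'_m/(m-k'_m)=k_n/(n-k_n-r)$, so it is exactly $b_n^{(r)}$, and $m-k'_m=n-k_n-r$. We therefore obtain, uniformly in $k\ge -k_n$,
\[
\sqrt{k_n}\,\frac{(b_n^{(r)})^{k_n+k}}{F(b_n^{(r)})^{n-k_n-r}}\,\mathbb{P}(S_{n-k_n-r}=k_n+k)=\frac{1}{\sqrt{2\pi}}e^{-k^2/(2k_n)}+o(1).
\]

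\emph{Changing the normalization from $b_n^{(r)}$ to $b_n$.} This is the main point to check; I expect it to go through as follows.
\begin{itemize}
\item \emph{Power of $b$.} For $-k_n\le k\le 0$ we have $0\le k_n+k\le k_n$. By item (i),
\[
\left(\frac{b_n}{b_n^{(r)}}\right)^{k_n+k}=\left(1+O(1/n)\right)^{k_n+k}=\exp\!\left(O(k_n/n)\right)\to1,
\]
uniformly in such $k$, since $k_n=o(n)$.
\item \emph{Power of $F$.} The map $\log F$ has bounded derivative near $0$, so
\[
\left|\log F(b_n^{(r)})-\log F(b_n)\right|=O\!\left(\frac{k_n}{n^2}\right).
\]
Multiplying by $n-k_n-r\le n$ gives $O(k_n/n)\to0$, hence $F(b_n^{(r)})^{n-k_n-r}/F(b_n)^{n-k_n-r}\to1$.
\end{itemize}
Consequently the normalizing factor with $b_n^{(r)}$ and the one with $b_n$ have ratio tending to $1$, uniformly in $-k_n\le k\le 0$.

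\emph{Item (ii).} From the uniform estimate, for $n$ large the left-hand side is at most $1/\sqrt{2\pi}+1\le 2$ for all $k\ge -k_n$. This gives
\[
\mathbb{P}(S_{n-k_n-r}=k_n+k)\le \frac{2}{\sqrt{k_n}}\,\frac{F(b_n^{(r)})^{n-k_n-r}}{(b_n^{(r)})^{k_n+k}}.
\]
Replacing $b_n^{(r)}$ by $b_n$ costs a factor that tends to $1$ uniformly for $-k_n\le k\le0$. Any $C>2$ then works for $n$ large.

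\emph{Item (iii).} Take $k=-s$, which is fixed. Then $k\ge -k_n$ for $n$ large, and the Gaussian term $e^{-s^2/(2k_n)}\to1$. The uniform estimate therefore gives
\[
\mathbb{P}(S_{n-k_n-r}=k_n-s)\sim\frac{1}{\sqrt{2\pi k_n}}\,\frac{F(b_n^{(r)})^{n-k_n-r}}{(b_n^{(r)})^{k_n-s}}.
\]
Converting to $b_n$ as above (for $s\le0$ as well, because the exponent $k_n-s=k_n+O(1)$ still gives $(1+O(1/n))^{k_n+O(1)}\to1$) yields the claim.
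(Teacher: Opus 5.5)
Your proof is correct and takes essentially the same approach as the paper, which also gets (i) from the linear behaviour of $h(x)=xF'(x)/F(x)$ at $0$ plus the mean value theorem, and gets (ii)--(iii) by applying Theorem~\ref{thKM} with $n$ replaced by $n-r$ and then trading $b_n^{(r)}$ for $b_n$ through the same $O(k_n/n)$ logarithmic estimates. The one difference is that the paper guarantees $b_n\to0$ via global monotonicity of $h$ (from $xh'(x)=\operatorname{Var}(\eta_x)\ge 0$), whereas you use local invertibility near $0$ together with the uniqueness implicit in the definition of $b_n$; your explicit reindexing $k'_m=k_{m+r}$ and your separate check of the case $s<0$ in (iii) make precise two points the paper leaves implicit.
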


\begin{proof}
Set $h(x)\coloneqq xF'(x)/F(x)$. The function $h$ is nondecreasing on $[0,1)$; indeed, if $\eta_x$ denotes the tilted law $\mathbb{P}(\eta_x=j)=\eta(j)x^j/F(x)$, then $xh'(x)=\operatorname{Var}(\eta_x)\ge0$. Since $\eta(0)>0$ and $\eta(1)>0$, we have $F(x)\to \mathbb{P}(\eta=0)$ and $F'(x)\to \mathbb{P}(\eta=1)$ as $x\downarrow0$, and therefore $h(x)\sim x\mathbb{P}(\eta=1)/\mathbb{P}(\eta=0)$. As $h(b_n)=k_n/(n-k_n)\to0$, this implies $b_n\to0$, and hence
$$
b_n=\frac{k_n}{n-k_n}\frac{F(b_n)}{F'(b_n)} \underset{n \to \infty}{\sim}  \frac{k_n}{n}\frac{\mathbb{P}(\eta=0)}{\mathbb{P}(\eta=1)}.
$$
Similarly, since $r$ is fixed, $h(b_n^{(r)})=k_n/(n-k_n-r)\to0$, so $b_n^{(r)}\to0$. Moreover, $h'(0)=\mathbb{P}(\eta=1)/\mathbb{P}(\eta=0)>0$, so $h'$ is bounded away from $0$ in a neighborhood of $0$. By the mean value theorem applied with $h$,
$$
|b_n^{(r)}-b_n|\le C\left|\frac{k_n}{n-k_n-r}-\frac{k_n}{n-k_n}\right|=O\left(\frac{k_n}{n^2}\right).
$$
Dividing by $b_n\sim k_n\mathbb{P}(\eta=0)/(n\mathbb{P}(\eta=1))$ gives $b_n^{(r)}/b_n=1+O(1/n)$, proving $(i)$.

We now prove $(ii)$. Applying Theorem \ref{thKM} with $n$ replaced by $n-r$, whose associated tilting parameter is $b_n^{(r)}$, we get, for $n$ large enough and uniformly for $k\ge -k_n$,
\[\mathbb{P}\left(S_{n-k_n-r}=k_n+k\right) \leq \frac{C}{\sqrt{k_n}}\frac{F\big(b_n^{(r)}\big)^{n-k_n-r}}{\big(b_n^{(r)}\big)^{k_n+k}}.\]
It remains to replace $b_n^{(r)}$ by $b_n$. Since $b_n^{(r)}-b_n=O(k_n/n^2)$ and $F(0)>0$, we have $\log(F(b_n^{(r)})/F(b_n))=O(k_n/n^2)$, while $b_n^{(r)}/b_n=1+O(1/n)$ gives $\log(b_n/b_n^{(r)})=O(1/n)$. Therefore, uniformly for $-k_n\le k\le0$,
$$
(n-k_n-r)\log\frac{F\big(b_n^{(r)}\big)}{F(b_n)}+(k_n+k)\log\frac{b_n}{b_n^{(r)}}=O\left(\frac{k_n}{n}\right)=o(1).
$$
Hence
$$
\frac{F\big(b_n^{(r)}\big)^{n-k_n-r}}{\big(b_n^{(r)}\big)^{k_n+k}}\le C'\frac{F(b_n)^{n-k_n-r}}{b_n^{k_n+k}},
$$
uniformly for $-k_n\le k\le0$. This proves $(ii)$.

Finally, let $s\in\mathbb{Z}$ be fixed. For $n$ large enough, $-s\ge -k_n$, and Theorem \ref{thKM} applied again with $n$ replaced by $n-r$ gives
$$
\mathbb{P}(S_{n-k_n-r}=k_n-s) \underset{n \to \infty}{\sim} \frac{1}{\sqrt{2\pi k_n}}\frac{F\big(b_n^{(r)}\big)^{n-k_n-r}}{\big(b_n^{(r)}\big)^{k_n-s}}e^{-s^2/(2k_n)}.
$$
Since $s$ is fixed, $e^{-s^2/(2k_n)}=1+o(1)$. Moreover, the logarithmic estimate above, now with $k=-s$, gives
$$
\frac{F\big(b_n^{(r)}\big)^{n-k_n-r}}{\big(b_n^{(r)}\big)^{k_n-s}} \underset{n \to \infty}{\sim} \frac{F(b_n)^{n-k_n-r}}{b_n^{k_n-s}}.
$$
This proves $(iii)$ and completes the proof.
\end{proof}

\section{Size of $R_n^{k_n}$ and $T_n^{k_n}$} \label{sec3}
 The aim of this section is to prove Theorem \ref{Th_size_R}. In the whole section, we work in the regime $k_n=o(n)$. We begin by introducing some notation and a transformation of trees that will play a central role in the sequel.
\subsection{Definitions}
We denote by $T$ a $\mu$-BGW tree and by $T^{k_n}_{n}$ such a tree conditioned on having $n$ vertices and $k_n$ leaves (when this conditioning is non degenerate). In the sequel, $\mathbb{T}_{n}$ will be the set of trees having $n$ vertices, $\mathbb{T}^{k_n}$ the set of trees with $k_n$ leaves and $\mathbb{T}_{n}^{k_n}$ the set of trees having $n$ vertices and ${k_n}$ leaves. For every tree $a$, recall that $c_u(a)$ denotes the number of children of the vertex $u$ in $a$ and $\phi_i(a)$ the number of vertices having exactly $i$ children, i.e.
\[
\phi_i(a) \coloneqq \left|\{u \in a : c_u(a) = i\} \right|.
\]

\begin{definition} Let $ a \in \mathbb{T}_{n}^{k_n}$.
The reduced tree of $a$, denoted by $R(a)$, is obtained by removing all vertices in $a$ that have exactly one child. See Figure \ref{Fig1.2} for an illustration.
\end{definition}
\begin{figure}[h!]
\centering
        \includegraphics[scale=1]{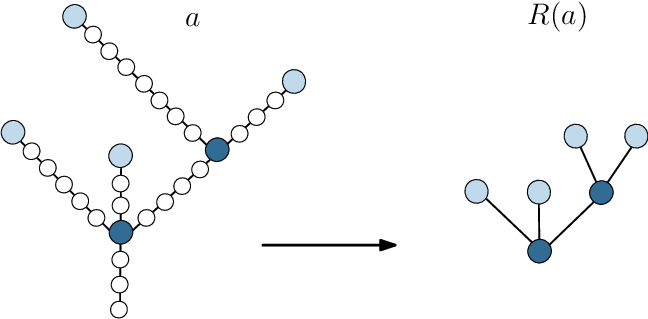}
        \caption{An example of a tree $a$ with its reduced tree $R(a)$.}
\label{Fig1.2}
\end{figure}
\noindent To simplify notation, if $T$ is a $\mu$-BGW tree, we write $R$ for its reduced tree. Similarly, for a conditioned tree $T_n^{k_n}$, we set $R_n^{k_n} \coloneqq R(T_n^{k_n})$.

\subsection{Proof of Theorem \ref{Th_size_R}}
We first establish that, in the regime $k_n = o(n)$ where $k_n\rightarrow \infty$, most internal vertices of $R_n^{k_n}$ have exactly two children. In particular, the size of $R_n^{k_n}$ is asymptotically that of a binary tree with $k_n$ leaves.
\begin{proposition}\label{prop_size_R}
When $k_n=o(n)$, $k_n\rightarrow \infty$ and $\mu(2)>0$, we have with probability tending to $1$:
 \[\phi_2\left(R_n^{k_n}\right) \underset{n \to \infty}{\sim} k_n  \qquad \text{and} \qquad \left|R_n^{k_n}\right| \underset{n \to \infty}{\sim} 2k_n  .\]
\end{proposition}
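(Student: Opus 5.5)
We plan to reduce the statement to a first-moment bound on the number of vertices with at least three children. Note that $R_n^{k_n}$ has exactly $k_n$ leaves and no vertex with one child, and its outdegrees $i\ge2$ are those of $T_n^{k_n}$. The identity $\sum_u c_u = |a|-1$ applied to $R_n^{k_n}$ gives
\[
k_n - 1 = \sum_{i\geq 2}(i-1)\phi_i(R_n^{k_n}) = \phi_2(R_n^{k_n}) + \sum_{i\geq 3}(i-1)\phi_i(R_n^{k_n}).
\]
Hence it suffices to show that $Z_n\coloneqq\sum_{i\ge3}(i-1)\phi_i(T_n^{k_n}) = o_{\mathbb P}(k_n)$. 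Then $\phi_2(R_n^{k_n}) = k_n - 1 - Z_n \sim k_n$. Moreover $|R_n^{k_n}| = k_n + \phi_2 + \sum_{i\ge3}\phi_i \sim 2k_n$, since $\sum_{i\ge3}\phi_i\le Z_n$. By Markov's inequality we only need $\mathbb E[Z_n]=o(k_n)$.

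To compute $\mathbb E[Z_n]$, translate through the Łukasiewicz coding into the walk $W'$. The non-$(-1)$ steps of the Łukasiewicz path of $T_n^{k_n}$ are exactly $c_u-1$ over internal vertices $u$. So we apply Proposition~\ref{classiquedecomposition} with the cyclically invariant function $f=\sum_{j=1}^{n-k_n}(\widehat X_j)\1_{\widehat X_j\ge2}$, and divide by Corollary~\ref{propTW}. This gives
\[
\mathbb E[Z_n] = \frac{\mathbb E\big[\sum_{j=1}^{n-k_n} Y_j\1_{Y_j\ge 2}\1_{W'_{n-k_n}=k_n-1}\big]}{\mathbb P(W'_{n-k_n}=k_n-1)} = (n-k_n)\sum_{m\ge2} m\,\xi(m)\frac{\mathbb P(W'_{n-k_n-1}=k_n-1-m)}{\mathbb P(W'_{n-k_n}=k_n-1)}.
\]
Here exchangeability singles out the first step.

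The main step is bounding this ratio uniformly in $m$. We use Corollary~\ref{corKM} with $\eta=\xi$, for which $\eta(0)=\mu(1)/(1-\mu(0))>0$ and $\eta(1)>0$ because $\mu(2)>0$. Let $F$ be the generating function of $\xi$ and $b_n\sim \frac{k_n}{n}\frac{\mu(1)}{\mu(2)}$. For $2\le m\le k_n-1$, (ii) with $r=1$ bounds the numerator by $\frac{C}{\sqrt{k_n}}F(b_n)^{n-k_n-1}b_n^{-(k_n-1-m)}$. Item (iii) with $r=0$, $s=1$ gives the denominator $\sim \frac{1}{\sqrt{2\pi k_n}}F(b_n)^{n-k_n}b_n^{-(k_n-1)}$. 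The ratio is therefore at most $C' b_n^m/F(b_n)\le C'' b_n^m$ (as $F(b_n)\to\xi(0)>0$). For $m>k_n-1$ the numerator vanishes.

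It follows that
\[
\mathbb E[Z_n]\le C'' n\sum_{m\ge2} m\,\xi(m) b_n^m \le C'' n b_n^2 \sum_{m\ge 2} m\,\xi(m) b_n^{m-2}.
\]
Since $b_n\to0$, the last sum is bounded: for $b_n\le 1/2$ it is at most $\sum_m m2^{-(m-2)}<\infty$, using $\xi(m)\le1$. Thus $\mathbb E[Z_n]=O(n b_n^2)=O(k_n^2/n)=o(k_n)$ because $k_n=o(n)$, which concludes the proof. The main technical point is this uniformity in $m$; it is exactly why (ii) is stated uniformly for $-k_n\le k\le 0$, which covers all the relevant values $k=-1-m$. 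Note also that no moment assumption on $\mu$ is needed, thanks to the geometric factor $b_n^m$.
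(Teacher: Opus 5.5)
Your proof is correct and follows essentially the same route as the paper. The paper also reduces the statement to Lemma~\ref{prop_deg_Rnk} ($\sum_{d\ge2}d\phi_{d+1}(R_n^{k_n})=o_{\mathbb P}(k_n)$), proved by Markov, Proposition~\ref{classiquedecomposition} and Corollary~\ref{corKM}(ii)--(iii) giving the ratio bound $C'b_n^d/G(b_n)$ and hence $O(k_n/n)$, and then concludes with the same leaf and edge counting identities; the only cosmetic difference is that you apply Proposition~\ref{classiquedecomposition} once to the weighted function $f=\sum_j \widehat X_j\1_{\widehat X_j\ge2}$, whereas the paper applies it degree by degree and sums via Fubini.
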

\noindent Theorem \ref{Th_size_R} is a direct consequence of this proposition since $\phi_2(T_n^{k_n})=\phi_2(R_n^{k_n})$ and $\phi_1(T_n^{k_n}) = n - | R_n^{k_n}|$. The key step is the following lemma, which shows that vertices with outdegrees at least three contribute $o(k_n)$, and therefore form a negligible proportion of the branching vertices.

\begin{lemma}\label{prop_deg_Rnk}
    Assume that  $k_n=o(n)$, $k_n\rightarrow \infty$ and $\mu(2)>0$. Let $\epsilon > 0$. We have the following convergence \[\mathbb{P}\left(\sum_{d\geq2}d\phi_{d+1}\left(R_{n}^{k_n} \right) \geq \epsilon k_n\right) \xrightarrow[n\to\infty]{}0.\]
\end{lemma}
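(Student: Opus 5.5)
Throughout, write $Z\coloneqq\sum_{d\ge2}d\,\phi_{d+1}(R_n^{k_n})=\sum_{d\geq 2} d\,\phi_{d+1}(T_n^{k_n})$. The plan is to bound the first moment of $Z$ using the decomposition of Proposition~\ref{classiquedecomposition} and then apply Markov's inequality.

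\textbf{Step 1: reduction to the walk $W'$.} Choose the cyclically invariant function $f((y_i))=\sum_i y_i\1_{y_i\ge2}$. Applied to the steps of the Łukasiewicz path with the $-1$ steps removed, $f$ returns exactly $Z$. Combining Proposition~\ref{classiquedecomposition} with Corollary~\ref{propTW}, and setting $N\coloneqq n-k_n$, we obtain
\[
\mathbb{E}[Z]=\frac{\mathbb{E}\left[\sum_{i=1}^{N}Y_i\1_{Y_i\ge2}\1_{W'_{N}=k_n-1}\right]}{\mathbb{P}(W'_N=k_n-1)}
=N\sum_{j\ge2} j\,\xi(j)\,\frac{\mathbb{P}(W'_{N-1}=k_n-1-j)}{\mathbb{P}(W'_N=k_n-1)},
\]
where the second equality uses exchangeability of the $Y_i$.

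\textbf{Step 2: estimating the ratio.} Apply Corollary~\ref{corKM} with $\eta=\xi$, whose generating function is $F$; this requires $\xi(0)=\mu(1)/(1-\mu(0))>0$ and $\xi(1)>0$, and the latter holds since $\mu(2)>0$.
\begin{itemize}
\item For the denominator, Corollary~\ref{corKM}(iii) with $r=0$, $s=1$ gives $\mathbb{P}(W'_N=k_n-1)\sim \frac{1}{\sqrt{2\pi k_n}}F(b_n)^{N}b_n^{-(k_n-1)}$.
\item For the numerator terms with $j\le k_n-1$, Corollary~\ref{corKM}(ii) with $r=1$ and $k=-1-j\in[-k_n,0]$ gives $\mathbb{P}(W'_{N-1}=k_n-1-j)\le \frac{C}{\sqrt{k_n}}F(b_n)^{N-1}b_n^{-(k_n-1-j)}$.
\item For $j>k_n-1$ the probability vanishes, since the target value is negative.
\end{itemize}
Since $F(b_n)\to\xi(0)>0$, each ratio is at most $C' b_n^{j}$. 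Hence
\[
\mathbb{E}[Z]\le C''\,n\sum_{j\ge2}j\,\xi(j)\,b_n^{j}=C''\,n\,b_n F'(b_n)\cdot\frac{\sum_{j\geq 2} j\xi(j)b_n^{j}}{b_nF'(b_n)} .
\]
As $b_n\to0$, we have $\sum_{j\ge2}j\xi(j)b_n^{j}\le b_n^2\sum_{j\geq 2} j\xi(j)b_n^{j-2}=O(b_n^2)$, because $b_n\le 1/2$ eventually and the power series converges at $1/2$. By Corollary~\ref{corKM}(i), $b_n\sim \frac{k_n}{n}\frac{\xi(0)}{\xi(1)}$. Therefore $\mathbb{E}[Z]=O(n b_n^2)=O(k_n^2/n)=o(k_n)$ because $k_n=o(n)$.

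\textbf{Step 3: conclusion.} Markov's inequality gives $\mathbb{P}(Z\ge\epsilon k_n)\le \mathbb{E}[Z]/(\epsilon k_n)\to0$, which is the statement of Lemma~\ref{prop_deg_Rnk}.

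\textbf{Main obstacle.} The delicate point is the uniformity in $j$ in Step 2. Corollary~\ref{corKM}(ii) is stated uniformly for $-k_n\le k\le 0$ but for a fixed $r$; here $r=1$ is fixed and $k=-1-j$ covers the whole required range, so it applies as stated. One must also check that the constant absorbing $F(b_n)^{-1}$ is uniform in $j$, which it is, and that the resulting series $\sum_{j\geq 2} j\xi(j)b_n^j$ is finite. The latter holds because $\xi$ is a probability measure and $b_n<1$, so no moment assumption on $\mu$ is needed.
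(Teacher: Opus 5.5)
Your proposal is correct and follows essentially the same route as the paper. Both use Markov's inequality, rewrite the first moment via Proposition~\ref{classiquedecomposition} and Corollary~\ref{propTW} as $(n-k_n)\sum_{j\ge2} j\,\xi(j)\,\mathbb{P}(W'_{n-k_n-1}=k_n-1-j)/\mathbb{P}(W'_{n-k_n}=k_n-1)$, and bound each term by $C'b_n^j$ using Corollary~\ref{corKM}(ii)--(iii), which gives $O(nb_n^2)=O(k_n^2/n)=o(k_n)$. The only differences are cosmetic: you encode all degrees at once in the symmetric function $f=\sum_i y_i\1_{y_i\ge2}$ instead of applying Fubini and treating each $d$ separately, and you keep the weights $\xi(j)$ rather than bounding them by $1$.
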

\begin{proof}
      Let $\epsilon > 0$. Applying the Markov inequality and Fubini's theorem, we have the following inequality:
      \begin{align}
\mathbb{P}\left(\sum_{d\geq2}d\phi_{d+1}\left(R_{n}^{k_n} \right) \geq \epsilon k_n\right) & = \mathbb{P}\left(\sum_{d\geq2}d\phi_{d+1}\left(R \right) \geq \epsilon k_n  \mid T \in \mathbb{T}_{n}^{k_n} \right)  \\ & \label{Markov} \leq \frac{1}{\epsilon k_n} \sum_{d\geq2}d \,\mathbb{E}\!\left[ \phi_{d+1}(R) \mid T \in \mathbb{T}_{n}^{k_n} \right].
\end{align}
Using the coding of $T$ by its \L ukasiewicz path (Proposition \ref{propcodingluka}), if $W$
denotes the random walk started from $W_0=0$ and with i.i.d. increments $(X_{i})_{i \in \mathbb{Z}_{\geq 0}}$ distributed according to $\nu$ defined by $ \nu(i) =\mu(i+1) $ for all $ i \geq -1$, we deduce that 
\[\mathbb{E}\left[\phi_{d+1}(R)\1_{T \in \mathbb{T}^{k_n}_n}\right] = \mathbb{E}\left[ \left| \left\{ j\in [\![1,n-k_n]\!] : \widehat{X}_j=d \right\}\right| \1_{ W_n=-1, \ W_i \geq 0 \, \forall i \in [\![1,n-1]\!], \ |\{i \in [\![1,n]\!] : X_{i}=-1\}|=k_n  }\right]. \]
Applying Proposition \ref{classiquedecomposition} with $f(x_1,\ldots,x_{n-k_n}) = \vert\{ j \in [\![1,n-k_n]\!] : x_j=d\}\vert$, we obtain that the latter expectation is equal to
\begin{align*}
 \frac{1}{n} \binom{n}{k_n}\mu(0)^{k_n} (1-\mu(0))^{n-k_n}\mathbb{E}\left[ \left| \left\{ j\in [\![1,n-k_n]\!] : Y_j=d \right\}\right| \1_{W'_{n-k_n}=k_n-1} \right],
\end{align*}
 where $W'$ is a random walk starting at $0$ with i.i.d. increments $(Y_{i})_{i \in \mathbb{Z}_{\geq 0}}$ distributed according to $\xi$. Recall that $\xi$ is the probability distribution on $\mathbb{Z}_{\geq 0}$ defined by \[\xi = \frac{\mu(\cdot+1)}{1-\mu(0)},\]
and let $G$ be the generating function of $\xi$.
Moreover, we have by independence and symmetry \[\mathbb{E}\left[  \left| \left\{ j\in [\![1,n-k_n]\!] : Y_j=d \right\}\right| \1_{W'_{n-k_n}=k_n-1} \right] = (n-k_n)\mathbb{P}\left(W'_{n-k_n}=k_n-1, Y_1 = d \right),\]
and by the definition of $W'$ and $(Y_i)_{1\leq i \leq n-k_n}$, we obtain 
\begin{align*}
  \mathbb{P}\left(W'_{n-k_n}=k_n-1, Y_1 = d \right) & = \mathbb{P}\left(Y_1 = d  \right)\mathbb{P}\left(W'_{n-k_n}=k_n-1 \mid Y_1 = d  \right) \\ & = \frac{\mu(d+1)}{1-\mu(0)}\mathbb{P}\left(W'_{n-k_n-1}=k_n-d-1 \right).
\end{align*} 
We finally deduce that for every $d \in \mathbb{Z}_{\geq 2}$ 
\begin{align}\label{espcond}
    \mathbb{E} \left[ \phi_{d+1}(R) \1_{T \in \mathbb{T}_{n}^{k_n}} \right] \! = \frac{n-k_n}{n}  \binom{n}{k_n}\mu(0)^{k_n}\mu(d+1) (1-\mu(0))^{n-k_n-1}\mathbb{P} \left(W'_{n-k_n-1}=k_n-d-1  \right).
\end{align}
Combining with Equation \eqref{Markov} and Corollary \ref{propTW}, we get
\begin{align*}
\mathbb{P}\left(\sum_{d\geq2}d\phi_{d+1}\left(R_{n}^{k_n} \right) \geq \epsilon k_n\right) & \leq \frac{1}{\epsilon k_n} \sum_{d\geq2}d (n-k_n) \frac{\mu(d+1)}{1-\mu(0)} \frac{\mathbb{P} \left(W'_{n-k_n-1}=k_n-d-1  \right)}{\mathbb{P} \left(W'_{n-k_n}=k_n-1  \right)} \\ 
    & = \frac{1}{\epsilon k_n} \sum_{d=2}^{k_n-1}d (n-k_n) \frac{\mu(d+1)}{1-\mu(0)} \frac{\mathbb{P} \left(W'_{n-k_n-1}=k_n-d-1  \right)}{\mathbb{P} \left(W'_{n-k_n}=k_n-1  \right)},
\end{align*}
where the sum reduces to $d \leq k_n-1$ since $W'_{n-k_n-1} \geq 0$.
We now estimate the ratio of probabilities. We apply
Corollary \ref{corKM}.(ii) to the random walk $W'$ with
generating function $G$, with $r=1$ and $k=-d-1$. Since
$2\le d\le k_n-1$, we have $-k_n\le -d-1\le0$, and therefore,
uniformly for $2\le d\le k_n-1$,
\[\mathbb{P}(W'_{n-k_n-1}=k_n-d-1)
\le
\frac{C}{\sqrt{k_n}}
\frac{G(b_n)^{n-k_n-1}}{b_n^{k_n-d-1}}.\]
On the other hand, Corollary \ref{corKM}(iii), applied
with $r=0$ and $s=1$, gives
\[\mathbb{P}(W'_{n-k_n}=k_n-1)
\underset{n \to \infty}{\sim} \frac{1}{\sqrt{2\pi k_n}}
\frac{G(b_n)^{n-k_n}}{b_n^{k_n-1}}.\]
Consequently, there exists $C'>0$ such that, for all large enough $n$
and uniformly for $2\le d\le k_n-1$,
$$
\frac{\mathbb{P}(W'_{n-k_n-1}=k_n-d-1)}
{\mathbb{P}(W'_{n-k_n}=k_n-1)}
\le
C'\frac{b_n^d}{G(b_n)}.
$$
Plugging this bound into the previous estimate yields
$$
\mathbb{P}\left(
\sum_{d\ge2} d\varphi_{d+1}(R_n^{k_n})\ge \varepsilon k_n
\right)
\le
\frac{C'}{\varepsilon k_n}
\sum_{d=2}^{k_n-1}
d(n-k_n)\frac{\mu(d+1)}{1-\mu(0)}
\frac{b_n^d}{G(b_n)}.
$$
Since $\mu(d+1)/(1-\mu(0))=\xi(d)\le1$, $G(b_n)\to G(0)>0$, and
$b_n\to0$, we get, for $n$ large enough,
$$
\mathbb{P}\left(
\sum_{d\ge2} d\varphi_{d+1}(R_n^{k_n})\ge \varepsilon k_n
\right)
\le
\frac{C''(n-k_n)}{\varepsilon k_n}
\sum_{d\ge2} d b_n^d.
$$
But $\sum_{d\ge2} d b_n^d=b_n^2(2-b_n)/(1-b_n)^2=O(b_n^2)$, and by
Corollary \ref{corKM}(i), $b_n=O(k_n/n)$. Therefore,
$$
\frac{n-k_n}{k_n}\sum_{d\ge2}d b_n^d
=
O\left(\frac{n}{k_n}b_n^2\right)
=
O\left(\frac{k_n}{n}\right)
\underset{n\to\infty}{\longrightarrow}0.
$$
This proves the lemma.
\end{proof}

We now turn to the proof of Proposition \ref{prop_size_R}.
\begin{proof}[Proof of Proposition \ref{prop_size_R}]
On one hand,
\begin{align*}
    \sum_{d\geq 1}(d-1)\phi_{d}\left(R_n^{k_n}\right) & =  \sum_{d\geq 1}d\phi_{d}\left(R_n^{k_n}\right) -  \sum_{d\geq 1}\phi_{d}\left(R_n^{k_n}\right) = |R_n^{k_n}| -1 - \left(|R_n^{k_n}|-k_n\right) = k_n -1.
\end{align*}
On the other hand, by Lemma \ref{prop_deg_Rnk} we have with high probability \[\sum_{d\geq 1}(d-1)\phi_{d}\left(R_n^{k_n}\right) = \phi_{2}\left(R_n^{k_n}\right) + \sum_{d\geq 2}d\phi_{d+1}\left(R_n^{k_n}\right) = \phi_{2}\left(R_n^{k_n}\right) + o(k_n). \]
Thus, we obtain that \[\phi_{2}\left(R_n^{k_n}\right) \underset{n \to \infty}{\sim} k_n \qquad \text{whp}. \]
Finally, the number of edges of $R_n^{k_n}$ is given by \[\sum_{d\geq 0}d\phi_{d}\left(R_n^{k_n}\right) = 2\phi_{2}\left(R_n^{k_n}\right) + \sum_{d\geq 2}(d+1)\phi_{d+1}\left(R_n^{k_n}\right) .\]
Since $d+1 \leq 2d$ for every $d\geq 2$, we apply Lemma \ref{prop_deg_Rnk} to the last term, which implies that, with high probability, \[|R_n^{k_n}|\underset{n \to \infty}{\sim}2 k_n.\]
This completes the proof of Proposition \ref{prop_size_R}.
\end{proof}

\section{Degrees in $T^{k_n}_n$} \label{sec4}
Recall that for any tree $a \in \mathbb{T}_n$, we denote by $\deg_{\max}(a)$ its maximal outdegree, that is,
\[
\deg_{\max}(a) \coloneqq \max_{u \in a} c_u(a).
\]
\begin{table}[htbp]\caption{Table of the main notation introduced in Section \ref{sec4}, and used later.}
\centering
\begin{tabular}{c c p{12cm} }
\toprule
$\deg_{\max}(a)$ && $\max_{u \in a} c_u(a)$, for a tree $a \in \mathbb{T}_i$ \\
$\xi(i), p(i)$  &&$\mu(i+1)/(1-\mu(0)), \mathbb{P}(\xi = i)$, $\forall i \geq 0 $ \\
$\xi_d(i), p_d(i)$ && $\mu(i+1)/(\sum_{i=0}^{d-1}\mu(i+1))$, $\mathbb{P}(\xi_d = i)$, $ \forall i \in [\![0,d-1]\!] $ \\
$G, G_d$ && the generating function of $\xi$ and $\xi_d$  \\
$W^{[d]}_n$ && $W_n$ conditioned to have increments in $[\![0,d-1]\!]$ \\
\bottomrule
\end{tabular}
\end{table}
In this section, we prove Theorem \ref{Th_degrees}. The proof is divided into three parts, corresponding to the three statements. We start with the first one.
\subsection{Probability that degrees in $T_n^{k_n}$ are at most $d$}
Let $d \in \mathbb{Z}_{\geq 2}$ and assume that $k_n=o(n^{\frac{d-1}{d}})$. We aim to show that
\[\mathbb{P}\left(\mathrm{deg_{max}}(T_n^{k_n}) \leq d\right)\xrightarrow[n\to\infty]{}1.\]  By definition of $T_n^{k_n}$, we first rewrite
\[\mathbb{P}\left(\mathrm{deg_{max}}(T_n^{k_n}) \leq d \right) = \frac{\mathbb{P}\left(\mathrm{deg_{max}}(T) \leq d, T \in \mathbb{T}_{n}^{k_n} \right)}{\mathbb{P}\left( T \in \mathbb{T}_{n}^{k_n} \right)}. \]
Using the coding of $T$ by its \L ukasiewicz path (Proposition \ref{propcodingluka}), let $W$ be the random walk started at $W_0=0$ with i.i.d. increments $(X_{i})_{i \in \mathbb{Z}_{\geq 0}}$ distributed according to the law $\nu$ on $\mathbb{Z}_{\geq -1}$ defined by $\nu(\cdot) = \mu(\cdot+1). $ Then, $\mathbb{P}(\mathrm{deg_{max}}(T) \leq d, T \in \mathbb{T}^{k_n}_{n})$ is equal to 
\[\mathbb{P}\left(W_{n}=-1,  W_{i} \geq 0 \,\forall i \in [\![1,n-1]\!], \bigl|\{i : X_{i}=-1\}\bigr|=k_n, \widehat{X}_i \leq d-1 \, \forall i \in [\![1,n-k_n]\!]\right),\]
where we recall that $(\widehat{X}_i)_{i \geq 1}$ denotes the sequence obtained from $(X_i)_{i \geq 1}$ by removing all increments equal to $-1$.
Then, applying Proposition \ref{classiquedecomposition} with $f(x_1,\ldots,x_{n-k_n}) = \1_{x_i \leq d-1 \,\forall i \in [\![1,n-k_n]\!]} $, we obtain 
\[\mathbb{P}\!\left(\mathrm{deg_{max}}(T) \leq d, T \in \mathbb{T}^{k_n}_{n}\right)=\frac{1}{n} \mathbb{P}\left(B_{n}=k_n\right) \mathbb{P}\!\left(W'_{n-k_n} \! = k_n-1, Y_i \leq d-1 \,\, \forall i \! \in \! [\![1,n\!-\!k_n ]\!]\right),\] where $B_{n}$ is the sum of $n$ independent Bernoulli random variables with parameter $\mu(0)$ and $W'$ is a random walk starting at $0$ with i.i.d. increments $(Y_{i})_{i \in \mathbb{Z}_{\geq 0}}$ distributed according to $\xi$. 
 By independence of the $(Y_{i})_{i \in \mathbb{Z}_{\geq 0}}$, we have
\begin{align*}
      & \mathbb{P}\left(W'_{n-k_n}   = k_n-1, Y_i \leq d-1 \,\, \forall i  \in [\![1,n-k_n ]\!]\right) \\  & \qquad  = \mathbb{P}(Y_1 \leq d-1)^{n-k_n} \mathbb{P}\left(W'_{n-k_n}  = k_n-1  \mid Y_i \leq d-1 \,\, \forall i  \in  [\![1,n-k_n ]\!] \right) \\ 
    & \qquad = \mathbb{P}(Y_1 \leq d-1)^{n-k_n}\mathbb{P}\left(W^{[d]}_{n-k_n}  = k_n-1\right),
\end{align*}
where $W^{[d]}$ is equal in distribution to the random walk $W$ conditioned to have increments in $[\![0,d-1]\!]$.
Hence, we get
\begin{equation*}
 \mathbb{P}\left(\mathrm{deg_{max}}(T) \leq d, T \in \mathbb{T}^{k_n}_{n} \right) = \frac{1}{n}\mathbb{P}\bigl(B_{n}=k_n\bigr)\mathbb{P}(Y_1 \leq d-1)^{n-k_n}\mathbb{P}\bigl(W^{[d]}_{n-k_n} = k_n-1\bigr).
\end{equation*}
Finally, using Corollary \ref{propTW} we obtain the following identity
\begin{equation}\label{eqdegmaxR}
    \mathbb{P}\left(\mathrm{deg_{max}}(T_n^{k_n}) \leq d  \right) = \mathbb{P}(Y_1 \leq d-1)^{n-k_n} \frac{\mathbb{P}\bigl(W^{[d]}_{n-k_n} = k_n-1\bigr)}{\mathbb{P}\bigl(W'_{n-k_n} = k_n-1\bigr)}.
\end{equation}
To estimate $\mathbb{P}(\mathrm{deg_{max}}(T_n^{k_n}) \leq d ) $, we need to analyze the asymptotic behavior of probabilities $\mathbb{P}(W'_{n-k_n} = k_n-1)$ and $\mathbb{P}(W^{[d]}_{n-k_n} = k_n-1)$. The key idea is that obtaining their explicit asymptotic expansions seems quite difficult, but we will manage to estimate the asymptotics of their ratios.

\subsubsection{$W$ conditioned to have nonnegative steps}\label{secW'}
We first study the term  $\mathbb{P}(W'_{n-k_n} = k_n-1)$. Recall that $\xi$ is the probability distribution on $\mathbb{Z}_{\geq 0}$ defined by \[\xi \coloneqq \frac{\mu(\cdot+1)}{1-\mu(0)}\] and let $G$ denote its generating function. Since $\mu(1)>0$ and $\mu(2)>0$, we have $\xi(0)>0$ and $\xi(1)>0$. Thus, applying Corollary \ref{corKM}(iii) to $W'$, we get that
\begin{equation}\label{eqW'}
    \mathbb{P}\left(W'_{n-k_n} = k_n-1\right) \underset{n \to \infty}{\sim} \frac{1}{\sqrt{2\pi k_n}}\frac{G\left(b_{n}\right)^{n-k_n}}{b_{n}^{k_n-1}},
\end{equation}
where
\begin{equation} \label{eqb_n1}
b_{n}= \frac{k_n}{n-k_n}\,\frac{G(b_{n})}{G'(b_{n})}
\underset{n \to \infty}{\sim} \frac{k_n}{n}\,\frac{\mathbb{P}(\xi  = 0)}{\mathbb{P}(\xi  = 1)}.
\end{equation}
To simplify the notation, we set $q_n \coloneqq k_n/(n-k_n)$ and \(p(i) \coloneqq \mathbb{P}(\xi = i)\) for every integer \(i\).
In what follows, we require a more refined expansion of $b_n$. We first derive its second order expansion. Introduce the notation $\widetilde{p}(i)\coloneqq (i+1)p(i+1)$, so that $G'(b_n)= \sum_{i\geq 0}\widetilde{p}(i)b_n^i$. Expanding $G$ and $G'$ at $0$, we obtain
\[b_n=q_n\left( \sum_{i\geq 0} p(i) b_n^i\right) \left( \sum_{i\geq 0} \widetilde{p}(i) b_n^i\right)^{-1} \!= q_n \, \frac{ p(0) + p(1) b_n }{\widetilde{p}(0) + \widetilde{p}(1) b_n }  (1 + o(b_n)).\]
Substituting the equivalent \eqref{eqb_n1} into this expression yields
\begin{align*}
   b_n &= q_n \, \frac{ p(0) + p(0)q_n + o\left(q_n\right) }{p(1) + \frac{2p(0)p(2)}{p(1)} q_n+ o\left(q_n\right) } \left(1 + o\left(q_n\right)\right) \\
   & = q_n \left( p(0) + p(0)q_n + o\left(q_n\right) \right) \left( \frac{1}{p(1)} - \frac{2p(0)p(2)}{p(1)^3}q_n + o\left(q_n\right)\right) \left(1 + o\left(q_n\right)\right).
\end{align*}
We conclude that $b_n$ admits the second-order expansion \begin{equation}\label{eqb_n}
    b_n = c_1q_n + c_2q^2_n + o\left(q^2_n\right), \quad \text{with } c_1 \coloneqq \frac{p(0)}{p(1)} \quad \text{and }  c_2 \coloneqq \frac{p(0)}{p(1)}\left(1 - \frac{2p(0)p(2)}{p(1)^2} \right).
\end{equation}
By iterating this bootstrapping argument, one can in fact obtain an expansion of $b_n$ to order $d$: there exist constants $c_1,\ldots,c_d$ such that
\begin{equation}\label{eqb_nordred}
    b_{n} = c_1q_n + \cdots + c_d q^d_n + o\left(q^d_n\right).
\end{equation}
We now derive asymptotic expansions for the terms appearing in \eqref{eqW'}. On the one hand, a Taylor expansion yields
\begin{align*}
    \log G(b_n)^{n-k_n}
&= (n-k_n) \left( \log{\left(p(0)\right)} + \log{\left( 1 +\frac{p(1)}{p(0)}b_n+ \cdots + \frac{p(d)}{p(0)}b_n^d  + o\left( b_n^d\right)\right)} \right) \\
    &  = (n-k_n) \left(\log{\left(p(0)\right)} +  \alpha_1q_n + \cdots + \alpha_dq^d_n + o\left( q^d_n\right)\right).
   \end{align*}
for suitable constants $\alpha_1,\ldots,\alpha_d$. Hence,
\[G(b_n)^{n-k_n}  = p(0)^{n-k_n} \exp{\left(\alpha_1(n-k_n)q_n + \cdots + \alpha_d(n-k_n)q^d_n + o\left((n-k_n)q^d_n\right)\right)},\]
On the other hand, using again the expansion of $b_n$ \eqref{eqb_nordred}, we obtain 
\begin{align*}\label{eqbnpuisd}
    b_n^{k_n-1}\! & = \!\left(c_1q_n\right)^{k_n-1} \exp{ \left(\beta_1(k_n-1)q_n+ \cdots + \beta_{d-1}(k_n-1)q^{d-1}_n + o \left((k_n-1)q^{d-1}_n \!\right) \right)},
\end{align*}
where $\beta_1,\ldots,\beta_{d-1} \in \mathbb{R}$.

\subsubsection{$W$ conditioned to have increments in $[\![0,d-1]\!]$}\label{secWd}
 We now study the probability $\mathbb{P}(W^{[d]}_{n-k_n} = k_n-1)$. Define the probability distribution $\xi_d$ on $[\![0,d-1]\!]$ by \[  \xi_d(i) \coloneqq  \frac{\mu(i+1)}{1-\mu(0)-\sum_{i=d}^{\infty}\mu(i+1)} = \frac{\mu(i+1)}{\sum_{i=0}^{d-1}\mu(i+1)}, \quad \text{for all } i \in [\![0,d-1]\!] \] and let $G_d$ denote its generating function.
 Note that $\mu(1)>0$ and $\mu(2)>0$ imply $\xi_d(0)>0$ and $\xi_d(1)>0$. Applying Corollary \ref{corKM}(iii) to  $W^{[d]}$, we obtain
\begin{equation}\label{eqWd}
    \mathbb{P}\left(W^{[d]}_{n-k_n} = k_n-1\right) \underset{n \to \infty}{\sim} \frac{1}{\sqrt{2\pi k_n}}\frac{G_d\left(b_{d,n}\right)^{n-k_n}}{b_{d,n}^{k_n-1}}, 
\end{equation}
where $b_{d,n}$ is defined by  \[b_{d,n}= q_n\,\frac{G_d(b_{d,n})}{G'_d(b_{d,n})}
\underset{n \to \infty}{\sim} \frac{k_n}{n}\,\frac{\mathbb{P}(\xi_d  = 0)}{\mathbb{P}(\xi_d  = 1)}.\]
For convenience, we set \(p_d(i) \coloneqq \mathbb{P}(\xi_d = i)\) for all integers $i$.
Then \[G_d(x) = \sum_{i=0}^{d-1}p_d(i)x^i = c_{\mu}\sum_{i=0}^{d-1}p(i)x^i \quad \text{and} \quad G'_d(x) = \sum_{i=1}^{d-1}ip_d(i)x^{i-1} = c_{\mu}\sum_{i=0}^{d-2}\widetilde{p}(i)x^i,\]
where \[c_{\mu} \coloneqq \frac{1-\mu(0)}{1-\mu(0)-\sum{i\geq d} \mu(i+1)}.\] 
In particular, $b_{d,n}$ satisfies
\begin{equation} \label{formule_b}
    b_{d,n}=q_n\left( \sum_{i= 0}^{d-1} p(i) b_{d,n}^i\right) \left( \sum_{i= 0}^{d-2} \widetilde{p}(i) b_{d,n}^i\right)^{-1}.
\end{equation}
By iterating the bootstrapping argument of the previous subsection, we deduce that $b_{d,n}$ admits the same expansion as $b_n$ up to order $d-1$, namely
\begin{equation} \label{eqb_ordre_d-1}
    b_{d,n} = c_1q_n + \cdots + c_{d-1} q^{d-1}_n + o\left(q^{d-1}_n \right),
\end{equation}
where $c_1,\ldots,c_{d-1}$ are the same constants as in the expansion of $b_n$.
We now determine the expansion of $b_{d,n}$ at order $d$ by comparison with that of $b_n$. At this order, we have 
\[ b_{n} = q_n \left(\sum_{i=0}^{d-1} p(i) b_{n}^i + o\left(b_n^{d-1}\right)\right) \left( \sum_{i=0}^{d-1} \widetilde{p}(i)b_n^i + o\left(b_n^{d-1}\right) \right)^{-1}\]
whereas \begin{equation}\label{bdn}
    b_{d,n} = q_n  \left(\sum_{i=0}^{d-1} p(i) b_{d,n}^i\right) \left( \sum_{i=0}^{d-2} \widetilde{p}(i)b_{d,n}^i  \right)^{-1}.
\end{equation}
Observe that the denominator in the expression of $b_n$ contains the additional term $dp(d)b_n^{d-1}$ compared to that of $b_{d,n}$. Consequently, injecting Equation \eqref{eqb_ordre_d-1} into the formula of $b_{d,n}$ \eqref{bdn}, we finally obtain
\begin{equation}\label{eqbdnordred}
 b_{d,n} = c_1q_n + \cdots + c_{d-1} q^{d-1}_n  + \left( \! c_{d} + \frac{dp(0)^{d}p(d)}{p(1)^{d+1}} \! \right) q^{d}_n   + o\left(q^{d}_n \right).
\end{equation}
Note that\[\frac{p(i)}{p(0)}=\frac{p_d(i)}{p_d(0)} \, \forall i \in [\![1,d-1]\!] \qquad \text{while} \qquad \frac{p_d(d)}{p_d(0)}=0 \text{ and } \frac{p(d)}{p(0)}=\frac{\mu(d+1)}{\mu(1)}.\]
Therefore, there exist constants $\alpha^{[d]}_1,\ldots,\alpha^{[d]}_d$ and $\beta^{[d]}_1,\ldots,\beta^{[d]}_{d-1}$ such that 
\begin{align*}
    \log{G_d(b_{d,n})^{n-k_n}} & \!= (n-k_n) \left( \log{\left(p_d(0)\right)} + \log{\left( 1 +\frac{p(1)}{p(0)}b_{d,n}+ \cdots + \frac{p(d-1)}{p(0)}b_{d,n}^{d-1} \right)} \! \right) \\
    &= (n-k_n) \! \left(\log{\left(p_d(0)\right)} +  \alpha^{[d]}_1 q_n + \cdots + \alpha^{[d]}_d q^{d}_n  + o\left( q^{d}_n \right) \!\right),
\end{align*}
so that  \begin{align*}
    G_d(b_{d,n})^{n-k_n} & = p_d(0)^{n-k_n}\exp{\left(\alpha^{[d]}_1(n-k_n)q_n + \cdots + \! \alpha^{[d]}_d (n-k_n)q^d_n  + o\left( (n-k_n)q^d_n\right)\right)},
\end{align*}
and 
\begin{align*}
    b_{d,n}^{k_n-1} & =  \left(c_1q_n\right)^{k_n-1} \exp{ \left( \beta^{[d]}_1(k_n-1)q_n+ \cdots + \beta^{[d]}_{d-1}(k_n-1)q^{d-1}_n +  o \left( k_nq^{d-1}_n\right)  \right)}.
\end{align*}
Combining with Equations \eqref{eqb_nordred} and \eqref{eqbdnordred}, we deduce that  
 \begin{align} 
\forall i \in [\![1,d-1]\!] , \, \alpha_i  & = \alpha^{[d]}_i \qquad \text{and} \qquad  \alpha^{[d]}_d = \alpha_d - \frac{p(d)c_1^d}{p(0)} +\frac{dp(0)^{d-1}p(d)}{p(1)^d}, \label{defalpha} \\
\forall i \in [\![1,d-2]\!] , \,\beta_i & = \beta^{[d]}_i \qquad \text {and} \qquad  \beta^{[d]}_{d-1} = \beta_{d-1} + \frac{dp(0)^{d}p(d)}{c_1p(1)^{d+1}}. \label{defbeta}
\end{align}

\subsubsection{Conclusion}
We now return to the estimation of $\mathbb{P}(\mathrm{deg_{max}}(T_n^{k_n}) \leq d )$. Recall that, by Equation \eqref{eqdegmaxR},
\begin{equation*}
    \mathbb{P}\left(\mathrm{deg_{max}}(T_n^{k_n}) \leq d  \right) = 
    \mathbb{P}(Y_1 \leq d-1)^{n-k_n} \frac{\mathbb{P}\bigl(W^{[d]}_{n-k_n} = k_n-1\bigr)}{\mathbb{P}\bigl(W'_{n-k_n} = k_n-1\bigr)}.
\end{equation*}
Consequently, using Equations \eqref{eqW'} and \eqref{eqWd}, we obtain the asymptotic equivalence 
\begin{equation}\label{eqWd-W-Gd-G}
\frac{\mathbb{P}\left(W^{[d]}_{n-k_n} = k_n-1\right)}{\mathbb{P}\left(W'_{n-k_n} = k_n-1\right)}\underset{n \to \infty}{\sim}  \frac{G_d\left(b_{d,n}\right)^{n-k_n}}{G\left(b_{n}\right)^{n-k_n}}\frac{b_{n}^{k_n-1}}{b_{d,n}^{k_n-1}}.
\end{equation}
Combining with the definition of $q_n$ and expansions obtained in the previous subsections, we have
\begin{align*}
   \frac{G_d\left(b_{d,n}\right)^{n-k_n}}{G\left(b_{n}\right)^{n-k_n}}\frac{b_{n}^{k_n-1}}{b_{d,n}^{k_n-1}} = &
   \left(\frac{p_d(0)}{p(0)}\right)^{n-k_n}   \exp{   \left(  \left(\alpha^{[d]}_d-\alpha_d \right)\frac{k_n^d}{(n-k_n)^{d-1}} + o \left( \frac{k_n^d}{(n-k_n)^{d-1}} \right) \right)} \\
   & \times \exp{ \left( -\left( \beta^{[d]}_{d-1}  - \beta_{d-1}\right) \frac{k_n^{d-1}(k_n-1)}{(n-k_n)^{d-1}} + o\left( \frac{k_n^d}{(n-k_n)^{d-1}}\right) \right)}.
\end{align*}
In particular, if $k_n=o\left( n^{\frac{d-1}{d}}\right)$, we get
\[
\frac{G_d\left(b_{d,n}\right)^{n-k_n}}{G\left(b_{n}\right)^{n-k_n}}\frac{b_{n}^{k_n-1}}{b_{d,n}^{k_n-1}}= \left(\frac{p_d(0)}{p(0)}\right)^{n-k_n} \! \! \! \! \! \! \! \! \exp{\left(-\frac{p(0)^{d-1}p(d)}{p(1)^d} \frac{k_n^{d}}{n^{d-1}} + o\left( \frac{k_n^d}{n^{d-1}}\right) \right)} \underset{n \to \infty}{\sim}  \left(\frac{p_d(0)}{p(0)}\right)^{n-k_n}.\]
Combining with the asymptotic equivalent \eqref{eqWd-W-Gd-G}, we obtain
\[\frac{\mathbb{P}\left(W^{[d]}_{n-k_n} = k_n-1\right)}{\mathbb{P}\left(W'_{n-k_n} = k_n-1\right)} \underset{n \to \infty}{\sim} \left(\frac{p_d(0)}{p(0)}\right)^{n-k_n}.\]
Observing that 
\[\left(\frac{p_d(0)}{p(0)}\right)^{n-k_n} = \left(\frac{1-\mu(0)}{\sum_{i = 0}^{d-1} \mu(i+1)}\right)^{n-k_n} = \frac{1}{\mathbb{P}(Y_1 \leq d-1)^{n-k_n}},\]
we finally get
\begin{equation*} \mathbb{P}\left(\mathrm{deg_{max}}(T_n^{k_n}) \leq d  \right) \xrightarrow[n\to\infty]{}1,
\end{equation*}
which completes the proof of the first item of Theorem \ref{Th_degrees}.

\subsection{Probability that $T_n^{k_n}$ has $m$ vertices with outdegree $d+1$}
We now prove the second statement of Theorem \ref{Th_degrees}. Fix $d \geq 2$, $m \geq 0$, $c>0$ and assume $k_n\underset{n \to \infty}{\sim} cn^{\frac{d-1}{d}}$. In particular, $k_n = o(n^{\frac{d}{d+1}})$, so the first item of Theorem \ref{Th_degrees} applied with $d+1$ yields
\begin{equation*}
   \mathbb{P}\left(\mathrm{deg_{max}}(T_n^{k_n}) \leq d+1 \right)\xrightarrow[n\to\infty]{}1.
\end{equation*}
Thus, it is enough to show that \[\mathbb{P}\! \left(\phi_{d+1}\left(T_n^{k_n}\right)=m, \mathrm{deg_{max}}(T_n^{k_n}) \leq d+1 \right)\! \underset{n \to \infty}{\sim} \frac{1}{m!} \! \left(c^d\frac{\mu(d+1)\mu(1)^{d-1}}{\mu(2)^d} \right)^m \! \! \!\exp{\! \left(\!-c^d\frac{\mu(1)^{d-1}\mu(d+1)}{\mu(2)^d} \right)}\]
to complete the proof of Theorem \ref{Th_degrees}.2.
First, we write \[\mathbb{P}\left(\phi_{d+1}\left(T_n^{k_n}\right)=m, \mathrm{deg_{max}}(T_n^{k_n}) \leq d+1 \right)  = \frac{\mathbb{P}\left(\phi_{d+1}\left(T\right)=m, \mathrm{deg_{max}}(T) \leq d+1, T \in \mathbb{T}_{n}^{k_n} \right)}{\mathbb{P}\left( T \in \mathbb{T}_{n}^{k_n} \right)}. \]
Using the coding of $T$ by its \L ukasiewicz path (Proposition \ref{propcodingluka}), the numerator can be expressed as
\[\mathbb{P}\!\left(W_{n}\!=\!-1,  W_{i} \geq 0 \,\forall i \!\in \![\![1,n\!-\!1]\!], \bigl|\{i:  X_{i}=\!-1\}\bigr|\!= \!k_n,\bigl|\{i:  \widehat{X}_{i}=\!d\}\bigr|\! = \!m,\widehat{X}_i \leq d \, \forall i \! \in \! [\![1,n\!-\!k_n]\!]\right)\!.\]
Applying Proposition \ref{classiquedecomposition} with \[f(x_1,\ldots,x_{n-k_n}) = \1_{\left|\{i:  x_{i}=\!d\}\right|=m} \1_{ x_i \leq d \, \forall i \in [\![1,n-k_n]\!]},\] we obtain that the latter probability is equal to
\begin{align*}
   \frac{1}{n}\binom{n}{k_n}\mu(0)^{k_n}(1-\mu(0))^{n-k_n} \mathbb{P}\Bigl(W'_{n-k_n} = k_n-1,\ |\{i : Y_i = d\}|=m,\ Y_i \leq d \ \forall i \in \! [\![1,n\!-\!k_n ]\!]\Bigr).
\end{align*}
Conditioning on the number of indices such that $Y_i=d$, we obtain that  $\mathbb{P}(W'_{n-k_n} = k_n-1,\ |\{i : Y_i = d\}|=m,\ Y_i \leq d \, \forall i \in \! [\![1,n\!-\!k_n ]\!])$ equals
\begin{align*}
  \binom{n-k_n}{m} \mathbb{P}(Y_1=d)^m \mathbb{P}(Y_1 \leq d-1)^{n-k_n-m} \mathbb{P}\Bigl(W^{[d]}_{n-k_n-m} = k_n-1-md\Bigr).
\end{align*}
Combining the above identities and using Corollary \ref{propTW}, we obtain the following identity
\begin{align}\label{eqdegRm}
     & \mathbb{P}\left(\phi_{d+1}\left(T_n^{k_n}\right)=m, \mathrm{deg_{max}}(T_n^{k_n}) \leq d+1 \right) \notag \\ & \qquad =  \binom{n-k_n}{m} \mathbb{P}(Y_1=d)^m \mathbb{P}(Y_1 \leq d-1)^{n-k_n-m} \frac{\mathbb{P}\Bigl(W^{[d]}_{n-k_n-m} = k_n-1-md\Bigr)}{\mathbb{P}\bigl(W'_{n-k_n}\!\! = k_n-1\bigr)}
\end{align}
It remains to analyze the asymptotic behavior of the ratio
\[
\frac{\mathbb{P}\bigl(W^{[d]}_{n-k_n-m} = k_n-1-md\bigr)}
{\mathbb{P}\bigl(W'_{n-k_n} = k_n-1\bigr)}.
\]
The asymptotic behavior of the denominator was established in Section~\ref{secW'}. By Corollary \ref{corKM}.(iii), the numerator is asymptotically equivalent to \[  \frac{1}{\sqrt{2\pi k_n}}\frac{G_d(b_{d,n})^{n-k_n-m}}{b_{d,n}^{k_n-1-md}},\] and can therefore be handled in exactly the same way as in the case $m=0$, treated in Section~\ref{secWd}. This yields
\begin{align*}
    G_d(b_{d,n})^{n-k_n-m} & \!= p_d(0)^{n-k_n-m}\exp{\left(\!\alpha^{[d]}_1 (n\!-\!k_n\!-\!m)q_n+ \cdots + \! \alpha^{[d]}_d(n\!-\!k_n\!-\!m)q^d_n + o\left( (n\!-\!k_n\!-\!m)q^d_n \right)\!\right)},
\end{align*}
and 
\begin{align*}\label{eqbnmpuisd}
    b_{d,n}^{k_n-1-md} \!& = \! \left(c_1q_n\right)^{k_n-1-md}   \!\exp{ \!\left( \beta^{[d]}_1(k_n\!-\!1\!-\!md)q_n + \! \cdots \!+ \beta^{[d]}_{d-1}(k_n\!-\!1\!-\!md)q^{d-1}_n \! +  o \!\left( \!(k_n\!-\!1\!-\!md)q^{d-1}_n \right) \! \right)},
\end{align*}
where we recall that $q_n = k_n/(n-k_n)$ and $\alpha^{[d]}_1,\ldots,\alpha^{[d]}_d,\beta^{[d]}_1,\ldots,\beta^{[d]}_{d-1}$ are defined in Equations \eqref{defalpha} and \eqref{defbeta}.
Hence, combining with Equations \eqref{eqW'} and \eqref{eqWd}, we deduce that if $k_n\underset{n \to \infty}{\sim} cn^{\frac{d-1}{d}}$
\[\frac{\mathbb{P}\bigl(W^{[d]}_{n-k_n-m} = k_n-1-md\bigr)}{\mathbb{P}\bigl(W'_{n-k_n} = k_n-1\bigr)} \underset{n \to \infty}{\sim} \frac{\left(p_d(0)\right)^{n-k_n-m}}{\left(p(0)\right)^{n-k_n}} \left(c_1 q_n\right)^{md}  \exp{\left(-c^d\frac{p(0)^{d-1}p(d)}{p(1)^d} \right)}. \]
Noticing that
\[ \binom{n-k_n}{m} \mathbb{P}(Y_1=d)^m \mathbb{P}(Y_1 \leq d-1)^{n-k_n-m} = \binom{n-k_n}{m}\frac{\mu(d+1)^m}{\left(1-\mu(0) \right)^{n-k_n}}\left(\sum_{i=0}^{d-1}\mu(i+1)\right)^{n-k_n-m} \]
and 
\[\frac{\left(p_d(0)\right)^{n-k_n-m}}{\left(p(0)\right)^{n-k_n}} \left(c_1q_n\right)^{md}= \left(\frac{\mu(1)}{\sum_{i=0}^{d-1}\mu(i+1)}\right)^{n-k_n-m}\left(\frac{1-\mu(0)}{\mu(1)}\right)^{n-k_n} \left(\frac{\mu(1)}{\mu(2)}\frac{k_n}{n-k_n}\right)^{md},\]
We deduce from Equation \eqref{eqdegRm} that $\mathbb{P}(\phi_{d+1}\left(T_n^{k_n}\right)=m, \mathrm{deg_{max}}(T_n^{k_n}) \leq d\!+\!1 )$ is asymptotically equivalent to 
\begin{align*}
& \frac{1}{m!} \frac{(n-k_n)!}{(n-k_n-m)!}\frac{k_n^{md}}{(n-k_n)^{md}}\left(\frac{\mu(d+1)\mu(1)^{d-1}}{\mu(2)^d} \right)^m \exp{\left(-c^d\frac{\mu(1)^{d-1}\mu(d+1)}{\mu(2)^d} \right)} \\
& \qquad \underset{n \to \infty}{\sim} \frac{1}{m!} \left(c^d\frac{\mu(d+1)\mu(1)^{d-1}}{\mu(2)^d} \right)^m \exp{\left(-c^d\frac{\mu(1)^{d-1}\mu(d+1)}{\mu(2)^d} \right)},
\end{align*}
which concludes the proof of Theorem \ref{Th_degrees}.2.

\subsection{Convergence in $L^p$}
We now prove the third item of Theorem \ref{Th_degrees}. Fix $d \in \mathbb{Z}_{\geq 2}$ and assume $k_n^d/n^{d-1} \rightarrow + \infty$ as $n\rightarrow +\infty$. The argument relies on two lemmas. First, Lemma \ref{cvg_en_proba} establishes a weak law of large numbers for the number of vertices in $T^{k_n}_n$ with outdegree $d+1$. 
Second, Lemma \ref{borneLp} ensures that this sequence is bounded in $L^p$ for every $p \geq 1$. Hence, the convergence holds in $L^p$ for every $p \geq 1$, and Theorem \ref{Th_degrees}.3 follows.

\begin{lemma}\label{cvg_en_proba}
If $k_n^d/n^{d-1} \rightarrow + \infty$ as $n\rightarrow +\infty$, then we have the following convergence in probability
\[\frac{n^{d-1}}{k_n^d} \phi_{d+1}\left(T_{n}^{k_n}\right)\xrightarrow[n\to\infty]{\mathbb{P}} \frac{\mu(1)^{d-1}\mu(d+1)}{\mu(2)^d}.\]
\end{lemma}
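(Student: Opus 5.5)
We prove the law of large numbers by a second moment method: we show that $\mathbb{E}[\phi_{d+1}(T_n^{k_n})] \sim \lambda k_n^d/n^{d-1}$ and $\mathbb{E}[\phi_{d+1}(T_n^{k_n})(\phi_{d+1}(T_n^{k_n})-1)] \sim (\lambda k_n^d/n^{d-1})^2$, where $\lambda \coloneqq \mu(1)^{d-1}\mu(d+1)/\mu(2)^d$. Then $\mathrm{Var}(\phi_{d+1})=o(\mathbb{E}[\phi_{d+1}]^2)$ (note $\mathbb{E}[\phi_{d+1}]\to\infty$ by assumption), and Chebyshev's inequality gives the convergence in probability. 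Since $\phi_{d+1}(T_n^{k_n})=\phi_{d+1}(R_n^{k_n})$, both moments can be computed exactly as in the proof of Lemma \ref{prop_deg_Rnk}.

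\textbf{First moment.} By \eqref{espcond} and Corollary \ref{propTW},
\[
\mathbb{E}\left[\phi_{d+1}(T_n^{k_n})\right] = (n-k_n)\,\xi(d)\,\frac{\mathbb{P}(W'_{n-k_n-1}=k_n-d-1)}{\mathbb{P}(W'_{n-k_n}=k_n-1)}.
\]
Corollary \ref{corKM}(iii), applied with $r=1,\ s=d+1$ for the numerator and $r=0,\ s=1$ for the denominator, shows that the ratio is asymptotically $b_n^d/G(b_n)$. Since $b_n\to0$, we have $G(b_n)\to p(0)$, and by Corollary \ref{corKM}(i), $b_n\sim (k_n/n)\,p(0)/p(1)$. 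Hence
\[
\mathbb{E}\left[\phi_{d+1}(T_n^{k_n})\right]\sim n\,\frac{p(d)}{p(0)}\left(\frac{p(0)}{p(1)}\right)^{d}\frac{k_n^d}{n^d}=\lambda\,\frac{k_n^d}{n^{d-1}},
\]
because $p(i)=\mu(i+1)/(1-\mu(0))$, so that $p(d)p(0)^{d-1}/p(1)^d=\mu(d+1)\mu(1)^{d-1}/\mu(2)^d$. Note that no uniform expansion in the exponent is needed here: $d$ is fixed, so only finitely shifted local probabilities appear.

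\textbf{Second factorial moment.} Applying Proposition \ref{classiquedecomposition} with $f(x)=|\{(i,j): i\neq j,\ x_i=x_j=d\}|$, which is invariant under cyclic permutations, and using exchangeability, we get
\[
\mathbb{E}\left[\phi_{d+1}(\phi_{d+1}-1)\right]=(n-k_n)(n-k_n-1)\,\xi(d)^2\,\frac{\mathbb{P}(W'_{n-k_n-2}=k_n-2d-1)}{\mathbb{P}(W'_{n-k_n}=k_n-1)}.
\]
Corollary \ref{corKM}(iii) with $r=2,\ s=2d+1$ shows that this ratio is asymptotically $b_n^{2d}/G(b_n)^2$. Therefore $\mathbb{E}[\phi_{d+1}(\phi_{d+1}-1)]\sim(\lambda k_n^d/n^{d-1})^2$, and consequently
\[
\mathrm{Var}(\phi_{d+1}) = \mathbb{E}[\phi_{d+1}(\phi_{d+1}-1)]+\mathbb{E}[\phi_{d+1}]-\mathbb{E}[\phi_{d+1}]^2 = o\bigl((k_n^d/n^{d-1})^2\bigr).
\]
Chebyshev's inequality then concludes the proof.

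\textbf{Main obstacle.} The delicate point is making the ratio asymptotics sharp to relative order $o(1)$ uniformly in $n$; we need the two equivalents to be of the form $1+o(1)$ multiplied by exactly the right constants. Corollary \ref{corKM}(iii) provides this for fixed $r$ and $s$, since the replacement of $b_n^{(r)}$ by $b_n$ only costs a factor $1+O(k_n/n)$, and $e^{-s^2/(2k_n)}\to1$. So the main care lies in checking that the $r$-shift and the power $b_n^{-(k_n-s)}$ combine to give exactly $b_n^{s-1}/G(b_n)^r$ up to a factor $1+o(1)$. This is the same bookkeeping already carried out in the proof of Corollary \ref{corKM}, so I expect it to go through without difficulty.
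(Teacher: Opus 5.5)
Your proposal is correct and follows essentially the same route as the paper: Chebyshev's inequality, with the first moment taken from \eqref{espcond} and the off-diagonal (factorial) second moment obtained from Proposition~\ref{classiquedecomposition}, both evaluated through Corollary~\ref{corKM}(iii), exactly mirroring the paper's decomposition $\mathbb{E}[\phi_{d+1}^2]=\mathbb{E}[\phi_{d+1}]+\sum_{j_1\neq j_2}(\cdots)$. The only thing to add is that your claim $\mathbb{E}[\phi_{d+1}(T_n^{k_n})]\to\infty$ requires $\mu(d+1)>0$, so the trivial case $\mu(d+1)=0$, where $\phi_{d+1}\equiv 0$, should be handled separately, as the paper does.
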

\begin{proof}
Let $d \in \mathbb{Z}_{\geq 2}$, let $\epsilon >0$. If $\mu(d+1)=0$, the claim is trivial. We assume $\mu(d+1) >0$. By the Bienaymé-Tchebychev inequality, we have 
\begin{equation}\label{BT}
    \mathbb{P} \left( \left| \frac{\phi_{d+1}\left(T_{n}^{k_n}\right)}{\mathbb{E}\left[\phi_{d+1}\left(T_{n}^{k_n} \right)\right]}-1 \right| > \epsilon \right) \leq \frac{\mathrm{Var}\left[\phi_{d+1}\left(T_{n}^{k_n} \right)\right]}{\epsilon^2\mathbb{E}\left[\phi_{d+1}\left(T_{n}^{k_n} \right)\right]^2}.
\end{equation}
Thus, it suffices to prove that \[\mathrm{Var}\left[\phi_{d+1}(T_{n}^{k_n})\right] = o\left(\mathbb{E}\left[\phi_{d+1}(T_{n}^{k_n})\right]^2 \,\right).\]
We first recall that by Equation \eqref{espcond},
\begin{align*}
  \mathbb{E}\left[\phi_{d+1}\left(T_{n}^{k_n} \right)\right]
  = \frac{1}{\mathbb{P}\left( T \in \mathbb{T}_{n}^{k_n} \right)} \frac{n-k_n}{n}  \binom{n}{k_n}\mu(0)^{k_n}\mu(d+1) (1-\mu(0))^{n-k_n-1}\mathbb{P} \left(W'_{n-k_n-1}=k_n-d-1  \right).
\end{align*}
Applying Corollary \ref{propTW} and then Corollary \ref{corKM}.(iii), we obtain
\begin{align*}
\mathbb{E}\left[\phi_{d+1}\left(T_{n}^{k_n} \right)\right] & = (n-k_n)\frac{\mu(d+1)}{1-\mu(0)}\frac{\mathbb{P} \left(W'_{n-k_n-1}=k_n-d-1  \right)}{\mathbb{P} \left(W'_{n-k_n}=k_n-1  \right)} \underset{n \to \infty}{\sim} n\frac{\mu(d+1)}{1-\mu(0)}\frac{b^d_n}{G(b_n)} .
\end{align*}
Using Equation \eqref{eqb_n1}, this yields
\begin{equation} \label{esp}
\mathbb{E}\left[\phi_{d+1}\left(T_{n}^{k_n} \right)\right] \underset{n \to \infty}{\sim} \frac{\mu(1)^{d-1}\mu(d+1)}{\mu(2)^d}\frac{k_n^d}{n^{d-1}}.
\end{equation}
We now turn to the second moment. We write 
\begin{align}\label{formespcond2}
    \mathbb{E}\left[\phi_{d+1}(T_{n}^{k_n})^{2} \right] = \frac{ \mathbb{E}\left[\phi_{d+1}(T)^2 \1_{T \in \mathbb{T}_{n}^{k_n}}  \right] }{\mathbb{P}\left( T \in \mathbb{T}_{n}^{k_n} \right)}.
\end{align}
Expanding the square and using the \L ukasiewicz path encoding (Proposition \ref{propcodingluka}), we deduce that  $\mathbb{E}[\phi_{d+1}(T)^2 \1_{T \in \mathbb{T}_{n}^{k_n}} ]$ is equal to
\begin{align*}
    & \sum_{j=1}^{n-1}  \mathbb{E} \left[\1_{\substack{\text{the } j^{th} \text{ vertex of } \\ T \text{ has } d+1 \text{ children} } }\1_{T \in \mathbb{T}_{n}^{k_n}} \right] + \sum_{j_1\neq j_2}  \mathbb{E} \left[\1_{\substack{\text{the } j_1^{th} \text{ and } j_2^{th} \text{ vertex} \\ \text{of } T \text{ have } d+1 \text{ children} } }\1_{T \in \mathbb{T}_{n}^{k_n}} \right] \\
    & = \! \mathbb{E} \!\left[\phi_{d+1}(T) \1_{T \in \mathbb{T}_{n}^{k_n}}  \right] \! + \! \! \sum_{j_1\neq j_2} \mathbb{P}\! \left(W_{n}=-1,  W_i \geq 0 \, \forall i \! \in \! [\![0,n\!-\!1]\!],\bigl|\{i: X_{i}=-1\}\bigr|=k_n,  \widehat{X}_{j_1} \!=\! \widehat{X}_{j_2}\! = \!d  \right) \!.
\end{align*}
Applying Proposition \ref{classiquedecomposition} with \[f(x_1,\ldots,x_{n-k_n}) = \sum_{ j_1  \neq  j_2 \in [\![1,n-k_n]\!]}\1_{x_{j_1} =  x_{j_2}  = d},\] we rewrite the second term as 
\begin{align*}
    \frac{1}{n} \binom{n}{k_n}\mu(0)^{k_n} (1-\mu(0))^{n-k_n}\mathbb{E}\left[ f\left( (Y_i)_{1\leq i \leq n-k_n}\right) \1_{W'_{n-k_n}=k_n-1} \right].
\end{align*}
Moreover, we have by independence \[\mathbb{E}\left[ f\left( (Y_i)_{1\leq i \leq n-k_n}\right) \1_{W'_{n-k_n}=k_n-1} \right] = (n-k_n)(n-k_n-1)\,\mathbb{P}\left(Y_1=Y_2 = d, W'_{n-k_n}=k_n-1 \right),\]
and by the definition of $W'$ and $(Y_i)_{1\leq i \leq n-k_n}$, we obtain 
\begin{align*}
  \mathbb{P}\left(Y_1=Y_2 = d, W'_{n-k_n}\!=k_n\!-\!1 \right) & = \mathbb{P}\left( Y_1=Y_2= d \right)\mathbb{P}\left(W'_{n-k_n}=k_n-1 \mid  Y_1=Y_2 = d \right) \\ 
 & = \left(\frac{\mu(d+1)}{1-\mu(0)}\right)^2\mathbb{P}\left(W'_{n-k_n-2}=k_n-2d-1 \right).
\end{align*} 
Hence, we deduce that $\mathbb{E}[\phi_{d+1}(T)^2 \1_{T \in \mathbb{T}_{n}^{k_n}} ]$ equals
\[\mathbb{E} \!\left[\phi_{d+1}(T) \1_{T \in \mathbb{T}_{n}^{k_n}} \! \right]  +  \frac{(n\!-\!k_n)(n\!-\!k_n\!-\!1)}{n} \binom{n}{k_n}\mu(0)^{k_n}\mu(d+1)^2 (1-\mu(0))^{n-k_n-2} \, \mathbb{P} \! \left( W'_{n-k_n-2}\!=k_n\!-\!2d\!-\!1 \right).\]
Plugging this into Equation \eqref{formespcond2}, we obtain
\begin{equation}\label{espcarré}
\! \mathbb{E} \! \left[\phi_{d+1}\left(T_{n}^{k_n} \right)^2\right] \! = \mathbb{E} \! \left[\phi_{d+1}(R_n^{k_n}) \right] + (n-k_n)(n-k_n-1)\frac{\mu(d\!+\!1)^2}{(1\!-\!\mu(0))^{2}} \frac{\mathbb{P} \!\left(W'_{n-k_n-2}\!=k_n\!-\!2d\!-\!1  \right)}{\mathbb{P} \left(W'_{n-k_n}\!=k_n\!-\!1 \right),}
\end{equation}
where, by Corollary \ref{corKM}.(iii), the second term is asymptotically equivalent to 
\begin{align*}
   n^2\frac{\mu(d+1)^2}{(1-\mu(0))^{2}}\frac{b^{2d}_n}{G(b_n)^2} \underset{n \to \infty}{\sim} \frac{\mu(1)^{2d-2}\mu(d+1)^2}{\mu(2)^{2d}}\frac{k_n^{2d}}{n^{2d-2}}.
\end{align*} 
Therefore,
\begin{align*}
\frac{\mathrm{Var}\left[\phi_{d+1}\left(T_{n}^{k_n} \right)\right]}{\mathbb{E}\left[\phi_{d+1}\left(T_{n}^{k_n} \right)\right]^2} = \frac{\mathbb{E}\left[\phi_{d+1}\left(T_{n}^{k_n} \right)^2\right]}{\mathbb{E}\left[\phi_{d+1}\left(T_{n}^{k_n} \right)\right]^2} - 1,
\end{align*}
where
\begin{align*}
\frac{\mathbb{E}\left[\phi_{d+1}\left(T_{n}^{k_n} \right)^2\right]}{\mathbb{E}\left[\phi_{d+1}\left(T_{n}^{k_n} \right)\right]^2} & = \frac{1}{\mathbb{E}\left[\phi_{d+1}(T_n^{k_n}) \right]} + \frac{n-k_n-1}{n-k_n}\frac{\mathbb{P} \left(W'_{n-k_n-2}=k_n-2d-1 \right)\mathbb{P} \left(W'_{n-k_n}=k_n-1 \right)}{\left(\mathbb{P} \left(W'_{n-k_n-1}=k_n-d-1 \right)\right)^2},
\end{align*}
and \[\frac{n-k_n-1}{n-k_n}\frac{\mathbb{P} \left(W'_{n-k_n-2}=k_n-2d-1 \right)\mathbb{P} \left(W'_{n-k_n}=k_n-1 \right)}{\left(\mathbb{P} \left(W'_{n-k_n-1}=k_n-d-1 \right)\right)^2} \underset{n \to \infty}{\sim}  1, \qquad \mathbb{E}\left[\phi_{d+1}(T_n^{k_n}) \right] \xrightarrow[n\to\infty]{} + \infty.\]
This implies
\[\frac{\mathrm{Var}\left[\phi_{d+1}\left(T_{n}^{k_n} \right)\right]}{\mathbb{E}\left[\phi_{d+1}\left(T_{n}^{k_n} \right)\right]^2} \xrightarrow[n\to\infty]{} 0.\]
and combining this with Equation \eqref{BT} completes the proof.
\end{proof}
\begin{lemma}\label{borneLp} If $k_n^d/n^{d-1} \rightarrow + \infty$ as $n\rightarrow +\infty$, then for every $p \in \mathbb{Z}_{\geq 1}$
    \[\sup_{n\geq 0} \mathbb{E}\left[\left(\frac{n^{d-1}}{k_n^d}\phi_{d+1}\left(T_{n}^{k_n} \right) \right)^p \,\right] < +\infty\]
\end{lemma}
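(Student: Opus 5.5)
We will bound the factorial moments and then pass to ordinary moments. For $p\ge1$ write $(x)_p = x(x-1)\cdots(x-p+1)$. Since $x^p=\sum_{j=1}^p S(p,j)(x)_j$ with Stirling numbers of the second kind $S(p,j)\ge0$, it suffices to show the following: for each fixed $j\ge1$ there is $C_j$ such that
\[
\mathbb{E}\left[(\phi_{d+1}(T_n^{k_n}))_j\right]\le C_j\left(\frac{k_n^d}{n^{d-1}}\right)^j
\]
for all $n$ large. Indeed, since $k_n^d/n^{d-1}\to\infty$, the lower-order terms $j<p$ are then dominated by the $j=p$ term. The finitely many small $n$ are harmless, because $\phi_{d+1}\le n$.

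\textbf{Step 1: exact formula.} We generalise the computation made for the second moment in the proof of Lemma \ref{cvg_en_proba}. We have
\[
(\phi_{d+1}(T))_j=\sum_{i_1,\dots,i_j \text{ distinct}}\1_{\widehat X_{i_1}=\cdots=\widehat X_{i_j}=d}.
\]
This is a cyclically invariant function of $(\widehat X_i)$, so Proposition \ref{classiquedecomposition} applies. By exchangeability of the $Y_i$ and Corollary \ref{propTW},
\[
\mathbb{E}\left[(\phi_{d+1}(T_n^{k_n}))_j\right]=(n-k_n)_j\,\xi(d)^j\,\frac{\mathbb{P}\left(W'_{n-k_n-j}=k_n-jd-1\right)}{\mathbb{P}\left(W'_{n-k_n}=k_n-1\right)}.
\]
When $jd+1>k_n$ the right-hand side vanishes, so we may assume $k_n\ge jd+1$.

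\textbf{Step 2: ratio estimate.} We apply Corollary \ref{corKM}(ii) with $r=j$ and $k=-jd-1\in[-k_n,0]$ to the numerator, and Corollary \ref{corKM}(iii) with $r=0$, $s=1$ to the denominator. Exactly as in the proof of Lemma \ref{prop_deg_Rnk}, this gives
\[
\frac{\mathbb{P}\left(W'_{n-k_n-j}=k_n-jd-1\right)}{\mathbb{P}\left(W'_{n-k_n}=k_n-1\right)}\le C\,\frac{b_n^{jd}}{G(b_n)^j}.
\]
Since $G(b_n)\to\xi(0)>0$ and $b_n=O(k_n/n)$ by Corollary \ref{corKM}(i), we get
\[
\mathbb{E}\left[(\phi_{d+1})_j\right]\le C' n^j\left(\frac{k_n}{n}\right)^{jd}=C'\left(\frac{k_n^d}{n^{d-1}}\right)^j,
\]
which is the bound required above.

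\textbf{Main obstacle.} The delicate point is the uniformity of Corollary \ref{corKM}(ii) and (iii) in the fixed parameters $r=j$ and $s=1$. These are fixed integers once $p$ is fixed, and the corollary covers any fixed $r$, so the argument goes through. One should nevertheless check that the constant $C$ from (ii) does not depend on $n$ for each fixed $j$, which it does not. Finally, combining the bound with the convergence in probability from Lemma \ref{cvg_en_proba}, uniform boundedness of the $2p$-th moment gives uniform integrability of the $p$-th powers. This yields the $L^p$ convergence in Theorem \ref{Th_degrees}.3.
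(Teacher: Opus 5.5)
Your proposal is correct and is essentially the paper's proof. Both arguments reduce the $p$-th moment to factorial moments via Stirling numbers of the second kind, compute each factorial moment exactly through Proposition~\ref{classiquedecomposition} and Corollary~\ref{propTW}, and control the resulting ratio of local probabilities for $W'$ with Corollary~\ref{corKM}. The only difference is that you use the upper bound (ii) where the paper uses the sharp equivalent (iii), which also identifies the limit $\bigl(\mu(1)^{d-1}\mu(d+1)/\mu(2)^d\bigr)^p$ of the normalized $p$-th moment rather than only its boundedness.
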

\begin{proof}
Let  $p \in \mathbb{Z}_{\geq 1}$. 
First, we write 
\begin{align}\label{formespcond}
    \mathbb{E}\left[\phi_{d+1}(T_{n}^{k_n})^{p} \right] = \frac{ \mathbb{E}\left[\phi_{d+1}(T)^p \1_{T \in \mathbb{T}_{n}^{k_n}}  \right] }{\mathbb{P}\left( T \in \mathbb{T}_{n}^{k_n} \right)} = \frac{1}{\mathbb{P}\left( T \in \mathbb{T}_{n}^{k_n} \right)} \mathbb{E}\left[ \left( \sum_{j=1}^{n}  \1_{\substack{\text{the } j^{th} \text{ vertex of } T \text{ has } \\ d+1 \text{ children, } T \in \mathbb{T}_{n}^{k_n} } } \right)^p \,\right] \qquad
\end{align}
To simplify the notation, we denote in the whole proof \[ \mathbf{1}_{j} \coloneqq \1_{\substack{\text{the } j^{th} \text{ vertex of } T \text{ has } \\ d+1 \text{ children, } T \in \mathbb{T}_{n}^{k_n} }}.\]
Moreover, if $S(p,r)$ denotes the number of ways to partition a set of size $p$ into $r$ nonempty and  unordered subsets, we have the following formula
\begin{equation}\label{forumuleavecS}
    \mathbb{E}\left[ \left( \sum_{j=1}^{n} \mathbf{1}_{j} \right)^p \,\right] = \sum_{r=1}^{p} S(p,r) \, \mathbb{E}\left[ \sum_{j_1 \neq \cdots \neq  j_r} \mathbf{1}_{j_1} \cdots \mathbf{1}_{j_r} \right].
\end{equation}
For every $r \in [\![1,p]\!]$, using the coding of $T$ by its \L ukasiewicz path (Proposition \ref{propcodingluka}), we get that 
$\mathbb{E} [\sum_{j_1 \neq \cdots \neq  j_r} \mathbf{1}_{j_1} \cdots \mathbf{1}_{j_r}]$ is equal to
\[\sum_{j_1 \neq \cdots \neq  j_r} \mathbb{P} \left(W_{n}=-1, W_i \geq 0 \, \forall i \in [\![0,n-1]\!] ,\bigl|\{i : X_{i}=-1\}\bigr|=k_n , \widehat{X}_{j_1}= \cdots = \widehat{X}_{j_r} = d \right) \]
Applying Proposition \ref{classiquedecomposition} with \[f(x_1,\ldots,x_{n-k_n}) = \sum_{ j_1 \neq \cdots \neq  j_r \in [\![1,n-k_n]\!]}\1_{ x_{j_1}= \cdots = x_{j_r} = d}, \] we obtain that the latter probability equals
\begin{align*}
     \frac{1}{n} \binom{n}{k_n}\mu(0)^{k_n} (1-\mu(0))^{n-k_n}\mathbb{E}\left[ f\left((Y_i)_{1\leq i \leq n-k_n} \right)\1_{W'_{n-k_n}=k_n-1} \right].
\end{align*}
Moreover, we have by independence \[\mathbb{E}\!\left( f\left((Y_i)_{1\leq i \leq n-k_n} \right)\1_{W'_{n-k_n}=k_n-1} \right) \!= (n-k_n)\cdots(n-k_n-r+1)\,\mathbb{P}\left( Y_1=\! \cdots  \!=Y_r = d ,W'_{n-k_n} \! =k_n-1\right),\]
and by the definition of $W'$ and $(Y_i)_{1\leq i \leq n-k_n}$, we obtain 
\begin{align*}
  \mathbb{P}\!\left(Y_1=\! \cdots  \!=Y_r = d, W'_{n-k_n}\!=k_n\!-\!1\right) 
 & = \mathbb{P}\left( Y_1=\! \cdots  \!=Y_r = d \right)\mathbb{P}\left(W'_{n-k_n}=k_n-1 \mid  Y_1=\! \cdots  \!=Y_r = d \right) \\ 
 & = \left(\frac{\mu(d+1)}{1-\mu(0)}\right)^r\mathbb{P}\left(W'_{n-k_n-r}=k_n-rd-1 \right).
\end{align*} 
Hence applying Corollary \ref{propTW}, we deduce that for every $r \in [\![1,p]\!]$, 
\begin{align} \label{eqesp_r}
    \frac{\mathbb{E} \left[\sum_{j_1 \neq \cdots \neq  j_r} \mathbf{1}_{j_1} \cdots \mathbf{1}_{j_r} \right]}{\mathbb{P}\left( T \in \mathbb{T}_{n}^{k_n} \right)} = (n-k_n)\cdots(n-k_n-r+1) \left(\frac{\mu(d+1)}{1-\mu(0)}\right)^r \frac{\mathbb{P}\left(W'_{n-k_n-r}=k_n-rd-1 \right)}{\mathbb{P}\left(W'_{n-k_n}=k_n-1 \right)},
\end{align}
and combining with Equation \eqref{eqesp_r}, Corollary \ref{corKM}(i) and Corollary \ref{corKM}.(iii), we obtain
\begin{align*}
     \frac{\mathbb{E} \left[\sum_{j_1 \neq \cdots \neq  j_r} \mathbf{1}_{j_1} \cdots \mathbf{1}_{j_r} \right]}{\mathbb{P}\left( T \in \mathbb{T}_{n}^{k_n} \right)}  \underset{n \to \infty}{\sim}  \left(\frac{\mu(1)^{d-1}\mu(d+1)}{\mu(2)^d}\right)^r \frac{k_n^{rd}}{n^{r(d-1)}}.
\end{align*}
Consequently, by Equations \eqref{formespcond} and \eqref{forumuleavecS}, we get that 
\begin{align*}
\mathbb{E}\left[\left(\frac{n^{d-1}}{k_n^d}\phi_{d+1}\left(T_{n}^{k_n} \right) \right)^p \,\right] \underset{n \to \infty}{\sim} \sum_{r=1}^p S(p,r) \! \left(\frac{\mu(1)^{d-1}\mu(d+1)}{\mu(2)^d}\right)^r \! \! \left(\frac{k_n^{d}}{n^{(d-1)}}\right)^{r-p} \! \! \! \! \underset{n \to \infty}{\sim} \left(\frac{\mu(1)^{d-1}\mu(d+1)}{\mu(2)^d}\right)^p \! \!,
\end{align*}
where the final asymptotic follows from the assumption that $k_n^d/n^{d-1} \rightarrow + \infty$ as $n\rightarrow +\infty$ and the identity $S(p,p) =1$. This concludes the proof of the lemma.
\end{proof}

\bibliographystyle{alpha}
\bibliography{sample}

\end{document}